\newtheorem{theorem}{Theorem}
\theoremstyle{plain}
\newtheorem{corollary}[theorem]{Corollary}
\newtheorem{lemma}[theorem]{Lemma}
\newtheorem{proposition}[theorem]{Proposition}
\numberwithin{equation}{section}
\numberwithin{theorem}{section}
\newcommand{\<}{\langle}
\renewcommand{\>}{\rangle}
\newcommand{\x}{\otimes}
\newcommand{\R}{\mathbb{R}}
\newcommand{\E}{\mathbb{E}}
\newcommand{\bF}{\mathbb{F}}
\newcommand{\N}{\mathbb{N}}
\newcommand{\bW}{\mathbb{W}}
\newcommand{\bD}{\mathbb{D}}
\newcommand{\bP}{\mathbb{P}}
\newcommand{\mH}{\mathcal{H}}
\newcommand{\mT}{\mathcal{T}}
\newcommand{\mF}{\mathcal{F}}
\newcommand{\mS}{\mathcal{S}}
\newcommand{\mC}{\mathcal{C}}
\newcommand{\mA}{\mathcal{A}}
\newcommand{\mU}{\mathcal{U}}
\newcommand{\mI}{\mathcal{I}}
\newcommand{\mJ}{\mathcal{J}}
\newcommand{\mV}{\mathcal{V}}
\newcommand{\sD}{\mathscr{D}}
\def\e{{\mathrm{e}}}
\begin{document}
\title{Sensitivity analysis in the infinite dimensional Heston model}
\date{\today}
\author{Fred Espen Benth\thanks{Department of Mathematics, University of Oslo, Oslo, Norway.} \\ \texttt{fredb@math.uio.no} \and Giulia Di Nunno\footnotemark[1]\,\,\thanks{Department of Business and Management Science, NHH Norwegian School of Economics, Bergen, Norway.} \\ \texttt{giulian@math.uio.no} \and Iben Cathrine Simonsen\footnotemark[1] \\ \texttt{ibens@math.uio.no}}



\maketitle

\begin{abstract}
We consider the infinite dimensional Heston stochastic volatility model proposed in \cite{BS}.
The price of a forward contract on a non-storable commodity is modelled by a generalized Ornstein-Uhlenbeck process in the Filipovi\'{c} space with this volatility.
We prove different representation formulas for the forward price.
Then we consider prices of options written on these forward contracts and we study
sensitivity analysis with computation of the Greeks with respect to different parameters in the model.
Since these parameters are infinite dimensional, we need to reinterpret the meaning of the Greeks.
For this we use infinite dimensional Malliavin calculus and a randomization technique.
\end{abstract} 

{\footnotesize\noindent{\em MSC 2020:} 60H07, 60H15, 91G20, 46N30 \\
\noindent{\em Keywords:} stochastic volatility, infinite dimensional Heston model, infinite dimensional Ornstein-Uhlenbeck processes, electricity markets, forward prices, option pricing, sensitivity analysis, Malliavin calculus, Greeks}

\section{Introduction}
Consider a forward contract on electricity or some other non-storable commodity.
The forward price is often represented as a random field $f:[0,T]\times \R^+\times\Omega\to\R$, where $T<\infty$ and $(\Omega,\mF,\bP)$ is a complete probability space equipped with a $\bP$-augmented filtration $\bF=\{\mF_t\}_{0\leq t \leq T}$. In the Musiela notation, $t\in[0,T]$ is the time horizon and $x=T-t\in\R^+$ represents the time to maturity. 
Clearly, we can regard the forward price as an infinite dimensional stochastic process $f:[0,T]\times\Omega\to H$, where $H$ is a suitable space of real-valued functions on $\R^+$. This is the approach taken to describe the dynamics of evolution of the forward prices.

In this paper we consider an infinite dimensional stochastic volatility model for the forward price. 
Our interests are motivated by the power markets where there are clear signs of intertemporal correlation structures across time to maturity (see e.g. \cite{BSBK}) and also signs of non-Gaussianity (see e.g. \cite{BP}).
We choose to consider the Heston-type infinite dimensional volatility model proposed in \cite{BS}, and study both the pricing of options written on the forwards and provide tools for the sensitivity analysis. 
For this last one, we have to introduce the adequate concepts in the infinite dimensional setting.
We resolve this by exploiting some ideas in the approach of \cite{BDHP}, using the interplay of functional derivatives and the Malliavin calculus via a form of randomization.
We remark that there is a structural difference in the application perspectives between our paper and \cite{BDHP}.
While we face an infinite dimensional problem all the way through, the authors in \cite{BDHP} consider a finite dimensional noise and path-dependent coefficients. 

Our paper is organized as follows. In Section \ref{section:vol} we introduce the forward price model.
In Section \ref{section:sensitivity} we consider prices of options written on such forward contracts and study the sensitivity of these prices with respect to different model parameters.

\section{Stochastic volatility forward price model} \label{section:vol}
Let $H$ be a separable Hilbert space with inner product $\<\cdot,\cdot\>_H$ and associated norm $\|\cdot\|_H$.
The space of bounded linear operators from $H$ into itself is denoted by $L(H)$.
It is a Banach space with the operator norm denoted by $\|\cdot\|_{\text{op}}$.
Furthermore, let $\mH\subseteq L(H)$ denote the space of Hilbert-Schmidt operators on $H$. 
Recall that $\mH$ is a separable Hilbert space.
The inner product and associated norm in $\mH$ are denoted by $\<\cdot,\cdot\>_{\mH}$ and $\|\cdot\|_{\mH}$, respectively. 

We start by introducing the stochastic volatility model from \cite{BS}.
Let $W=\{W_t\}_{t\geq 0}$ and $B=\{B_t\}_{t\geq 0}$ be independent $\bF$-adapted Wiener processes in $H$ with covariance operators $Q_W$ and $Q_B$.
Let $\mA$ and $\mC$ be densely defined operators on $H$ which generate $C_0$-semigroups $\{\mU_t\}_{t\geq 0}$ and $\{\mS_t\}_{t\geq 0}$ respectively, and let $\eta\in L(H)$.
Let $Z=\{Z_t\}_{t\geq 0}$ be an $\bF$-adapted stochastic process in $H$ satisfying $\|Z_t\|_H=1$ for all $t\geq 0$.
The process $X=\{X_t\}_{t\geq 0}$, which will later be used to model the forward price, is defined by the following equation:
\begin{equation}\label{YX}
\left\{
\begin{alignedat}{2}
 	&dX_t = \mathcal C X_tdt + \Gamma^Z_t dB_t, && \mspace{40mu} X_0=x_0 \in H,
 	\\
 	&dY_t = \mathcal{A} Y_t\, dt + \eta\, dW_t, && \mspace{40mu} Y_0=y_0 \in H,
\end{alignedat}
\right.
\end{equation}
where $\Gamma^Z_t := Z_t\x Y_t$ defines the volatility process $\Gamma_Z=\{\Gamma^Z_t\}_{t\geq 0}$ which is $\bF$-adapted and takes values in $\mH$.
From Peszat and Zabczyk~\cite[Sect.~9.4]{PZ} we know that \eqref{YX} has a unique mild solution given by
\begin{equation}\label{mild-sol}
\left\{
\begin{alignedat}{2}
 	&X_t = \mS_t x_0 + \int_0^t \mS_{t-s}\Gamma^Z_s\,dB_s, \qquad &t\geq 0,
 	\\
 	&Y_t = \mathcal{U}_t y_0 + \int_0^t \mathcal{U}_{t-s}\eta\, dW_s, \qquad &t\geq 0,
\end{alignedat}
\right.
\end{equation}
since the stochastic integral $\int_0^t \Gamma^Z_s dB_s$ is well-defined by \cite{BS}.
The stochastic integral $\int_0^t \mS_{t-s}\Gamma^Z_s\,dB_s$ is also well-defined, since $\mS_t \in L(H)$ and its operator norm grows at most exponentially by the Hille-Yoshida Theorem, see Engel and Nagel~\cite[Prop.~I.5.5]{EN}.

The variance process $\mV=\{\mV_t\}_{t\geq 0}$ is defined by $\mV_t\coloneqq Y_t^{\x 2} = Y_t\x Y_t = \<Y_t,\,\cdot\,\>_H Y_t$, $t\geq 0$.
It is a process of symmetric and positive definite operators in $\mH$, and hence it has a unique square root process, which is given by
\begin{equation*}
\mV^{1/2}_t=\left\{\begin{array}{cl}\|Y_t\|_H^{-1}\mV_t, & Y_t\neq 0 \\ 0, & Y_t=0.\end{array}\right. 
\end{equation*}
The variance process can be decomposed as $\mV_t = \Gamma^Z_t(\Gamma^Z_t)^*$ for all $t\geq 0$, where $(\Gamma^Z_t)^* = Y_t\x Z_t$.
Clearly, several choices of  $\{Z_t\}_{t\geq 0}$ are possible.
A simple choice of $\{Z_t\}_{t\geq 0}$ is $Z_t=\gamma\in H$ for all $t\geq 0$, where $\|\gamma\|_H=1$.  
We could also define $\{Z_t\}_{t\geq 0}$ as $Z_t=Y_t/\|Y_t\|_H$ when $Y_t\neq 0$ and $Z_t=0$ when $Y_t=0$ to get $\Gamma^Z_t = \mV^{1/2}_t$ for all $t\geq 0$.


We have a full description of the characteristic functional of $X_t$ for $t\geq 0$ (result presented in \cite[Proposition 9]{BS}):
\begin{proposition} \label{charfunc_X}
Assume that $Z$ is $\bF^Y$-adapted, where $\bF^Y$ is the filtration generated by $Y$.   
Then, for any $h\in H$ and $t\geq 0$,
\begin{align*}
\varphi_{X_t}(h) &\coloneqq\E\left[\e^{\mathrm{i}\left<X_t,h\right>_H}\right] \\
&= \e^{\mathrm{i}\left<\mS_t x_0,h\right>_H}\E\left[\exp\left(-\frac{1}{2}\left\<\left(\int_0^t\|Q_B^{1/2}Z_s\|_H^2\mS_{t-s}\mV_s\mS^*_{t-s}\,ds\right)h,h\right\>_H\right)\right],
\end{align*}
where the integral above is a Bochner integral in $L(H)$.
\end{proposition}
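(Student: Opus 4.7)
The strategy is to condition on $\mF^Y_t$ and exploit the fact that, under this conditioning, the integrand $\mS_{t-s}\Gamma^Z_s$ becomes deterministic while $B$ retains its Wiener-process structure by virtue of its independence from $W$ (hence from $Y$ and $Z$). Starting from the mild solution \eqref{mild-sol}, the deterministic factor pulls out of the expectation and
\[
\varphi_{X_t}(h)=\e^{\mi\<\mS_tx_0,h\>_H}\,\E\!\left[\E\!\left[\exp\!\left(\mi\left\<\int_0^t\mS_{t-s}\Gamma^Z_s\,dB_s,\,h\right\>_H\right)\Bigg|\mF^Y_t\right]\right].
\]
The assumption that $Z$ is $\bF^Y$-adapted implies $\Phi_s\coloneqq\mS_{t-s}\Gamma^Z_s$ is $\bF^Y$-predictable, and the plan is to recognize the conditional characteristic function of $\int_0^t\Phi_s\,dB_s$ given $\mF^Y_t$ as that of a centred Gaussian measure on $H$.

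To make this rigorous, I would approximate $\Phi$ by $\bF^Y$-predictable simple step processes $\Phi^n_s=\sum_i\Phi^n_{t_i}\mathbf{1}_{(t_i,t_{i+1}]}(s)$. Conditional on $\mF^Y_t$, the summands $\Phi^n_{t_i}(B_{t_{i+1}}-B_{t_i})$ are frozen linear maps applied to increments of $B$ that remain independent $H$-valued Gaussians with covariance $(t_{i+1}-t_i)Q_B$ (the distribution of $B$ is unchanged under conditioning since $B$ is independent of $\mF^Y_\infty\subset\mF^W_\infty$). Each summand is therefore centred Gaussian with conditional covariance $(t_{i+1}-t_i)\Phi^n_{t_i}Q_B(\Phi^n_{t_i})^*$, and their sum has conditional characteristic function
\[
\exp\!\left(-\tfrac12\sum_i(t_{i+1}-t_i)\<\Phi^n_{t_i}Q_B(\Phi^n_{t_i})^*h,h\>_H\right).
\]
Passing to the limit via the It\^o isometry and dominated convergence for characteristic functions produces
\[
\E\!\left[\exp\!\left(\mi\left\<\int_0^t\Phi_s\,dB_s,\,h\right\>_H\right)\Bigg|\mF^Y_t\right]=\exp\!\left(-\tfrac12\int_0^t\<\Phi_sQ_B\Phi_s^*h,h\>_H\,ds\right).
\]

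The remaining step is an algebraic identification. Using the tensor convention $u\x v:h\mapsto\<u,h\>_H v$ (the one consistent with the paper's identity $(\Gamma^Z_s)^*=Y_s\x Z_s$), one computes $(\Gamma^Z_s)^*h=\<Y_s,h\>_HZ_s$ and therefore
\[
\Gamma^Z_sQ_B(\Gamma^Z_s)^*h=\<Y_s,h\>_H\<Z_s,Q_BZ_s\>_HY_s=\|Q_B^{1/2}Z_s\|_H^2\,\mV_sh,
\]
whence $\Phi_sQ_B\Phi_s^*=\|Q_B^{1/2}Z_s\|_H^2\,\mS_{t-s}\mV_s\mS_{t-s}^*$. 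Interchanging the $L(H)$-valued Bochner integral with the bounded bilinear form $\<\,\cdot\, h,h\>_H$ and taking the outer expectation then yields the claimed formula. The main technical obstacle is precisely the conditional Gaussianity step: beyond the algebraic manipulations, one has to argue carefully that $B$ remains a $Q_B$-Wiener process with respect to the enlargement of $\bF$ that incorporates $\mF^Y_\infty$ at time zero, and that the simple-process approximation can be chosen $\bF^Y$-predictable with simultaneous $L^2$-convergence on both sides of the conditional identity.
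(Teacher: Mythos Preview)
The paper does not actually prove this proposition; it is quoted verbatim from \cite[Proposition~9]{BS} and stated without argument. Your proposal is therefore not comparable to a ``paper's proof'' here, but it is nonetheless the natural route and is essentially correct: condition on $\mF^Y_t$, use the independence of $B$ from $W$ (hence from $Y$ and $Z$) so that $B$ remains a $Q_B$-Wiener process in the enlarged filtration, recognise the stochastic integral as conditionally centred Gaussian with covariance $\int_0^t\Phi_sQ_B\Phi_s^*\,ds$, and identify $\Phi_sQ_B\Phi_s^*=\|Q_B^{1/2}Z_s\|_H^2\,\mS_{t-s}\mV_s\mS_{t-s}^*$ via the tensor calculus. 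This is precisely the argument one would expect in \cite{BS}, and the algebraic identification you wrote out matches the paper's conventions ($\mV_t=Y_t\otimes Y_t=\<Y_t,\cdot\>_HY_t$ and $(\Gamma^Z_t)^*=Y_t\otimes Z_t$). The caveats you flag at the end---predictable simple approximation and the filtration-enlargement step---are genuine technical points but standard once $B\perp W$ is invoked; they do not hide a gap.
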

From the proposition above, we see that for any $t\geq 0$, $X_t$, conditional on $\mF_t^Y$, is a Gaussian $H$-valued random variable. 
The expectation of $X_t$ is $\E[X_t]=\mS_t x_0$ and its covariance operator $Q_{X_t}$ is found in \cite{BS}:
\begin{equation} \label{eq:covar-X_t}
Q_{X_t}=\int_0^t\mS_{t-s}\E\left[\|Q^{1/2}_BZ_s\|_H^2\mV_s\right]\mS^*_{t-s}\,ds.
\end{equation} 
Hence $X$ is an $H$-valued conditionally Gaussian process.
In the particular case when $Z\equiv\gamma\in H$ with $\|\gamma\|_H=1$, we have that 
\begin{equation*}
Q_{X_t}=\int_0^t\mS_{t-s}\left((\mU_s y_0)^{\x2}+\int_0^s\mU_u\eta Q_W\eta^*\mU^*_u\,du\right)\mS^*_{t-s}\,ds,
\end{equation*}
which is obtained by applying Theorem 8.7(iv) of Peszat and Zabczyk~\cite{PZ}, see \cite{BS}.

\subsection{The forward price model}
We shall model the price $F(t,T)$ at time $t$ of a forward contract maturing at time $T$ by choosing a specific Hilbert space $H_w$ and evaluating $X_t$ at the time to maturity.
The price $F(t,T)$ will be called the forward price.
We will use the Musiela notation in which the forward price is written as $f(t,x)=F(t,t+x)$ where $x=T-t$ is the time to maturity.

The Filipovi\'{c} space $H_w$ was first introduced in \cite{filipovic} and is a Hilbert space consisting of real-valued measurable functions.
Let $w:\R_+\rightarrow[1,\infty)$ be a measurable and increasing weight function with $w(0)=1$ and $\int_0^{\infty} w^{-1}(x)\,dx<\infty$.
Then $H_w$ is defined as the space of functions $h\in L^1_{\text{loc}}(\R^+)$ which possess a weak derivative $h'\in L^1_{\text{loc}}(\R^+)$ such that $\int_0^{\infty} |h'(x)|^2 w(x)\,dx<\infty$. 
We know that every such function has an absolutely continuous version.
The norm on $H_w$ is defined as 
\begin{equation*}
\|h\|_w^2 \coloneqq |h(0)|^2 + \int_0^{\infty} |h'(x)|^2 w(x)\,dx.
\end{equation*}

The evaluation functional $\delta_x:H_w\rightarrow\R_+$ defined by $\delta_x(h)\coloneqq h(x)$ is a continuous linear functional on $H_w$ for all $x\in\R_+$, see \cite{filipovic}.
This means that $\delta_x\in H_w^*$, and hence by the Riesz representation theorem $\delta_x = \<\cdot,h_x\>_w$ for some element $h_x\in H_w$.
Let $\|\cdot\|_*$ denote the norm on $H_w^*$.
From \cite[Lemma 3.11]{filipovic}, we have
\begin{equation} \label{h_x}
 h_x(y) = 1 + \int_0^{x\wedge y} \frac{1}{w(s)}ds, \quad y\in\R_+,
\end{equation}
and we see that for $h\in H_w$,
\begin{align*}
\<h,h_{x}\>_w &= h(0)h_{x}(0) + \int_0^{\infty} w(y)h'(y)h_{x}'(y)\,dy = h(0) + \int_0^{\infty} w(y)h'(y)\frac{1}{w(y)}1_{\{y\leq x\}}\,dy \\
&= h(0) + \int_0^{x} h'(y)\,dy = h(x) = \delta_{x}(h).
\end{align*}
The following lemma is part of a result in Benth and Kr\"uhner~\cite[Lemma 3.1]{BK-HJM}.

\begin{lemma}
Let $\delta_x$ be the evaluation functional on $H_w$.
Then $\|\delta_x\|_*^2 = h_x(x)$, where $h_x$ is given by equation \eqref{h_x}. 
\end{lemma}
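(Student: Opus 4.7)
The plan is to combine the Riesz representation already exhibited in the excerpt with the explicit formula \eqref{h_x} for $h_x$. By the Riesz representation theorem, any continuous linear functional $\ell$ on a Hilbert space satisfies $\|\ell\|_* = \|h_\ell\|$, where $h_\ell$ is its Riesz representer. Since the excerpt has already established that $\delta_x = \langle \cdot, h_x\rangle_w$, I immediately get $\|\delta_x\|_*^2 = \|h_x\|_w^2 = \langle h_x, h_x\rangle_w$.

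The next step is to identify $\langle h_x, h_x\rangle_w$ with $h_x(x)$. The cleanest way is to observe that the displayed calculation just above the lemma, applied with $h = h_x$, gives $\langle h_x, h_x\rangle_w = \delta_x(h_x) = h_x(x)$, so the conclusion follows directly.

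As a sanity check (which could equivalently be used as the actual argument), one can compute the norm of $h_x$ from the defining formula for $\|\cdot\|_w$. From \eqref{h_x} we have $h_x(0) = 1$ and $h_x'(y) = w(y)^{-1}\mathbf{1}_{\{y \le x\}}$ for a.e.~$y$, hence
\begin{equation*}
\|h_x\|_w^2 = 1 + \int_0^\infty \frac{1}{w(y)^2}\mathbf{1}_{\{y \le x\}}\, w(y)\, dy = 1 + \int_0^x \frac{1}{w(y)}\, dy = h_x(x),
\end{equation*}
which matches the expression obtained from the Riesz identity.

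There is no real obstacle here; the only point to be careful about is that the weak derivative $h_x'$ indeed equals $w^{-1}\mathbf{1}_{[0,x]}$ almost everywhere, which is clear from \eqref{h_x}. The lemma is therefore essentially a restatement of the Riesz identity combined with the reproducing-kernel-style property $\langle h, h_x\rangle_w = h(x)$ specialized to $h = h_x$.
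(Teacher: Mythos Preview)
Your argument is correct: the Riesz isometry gives $\|\delta_x\|_*=\|h_x\|_w$, and the reproducing property $\langle h,h_x\rangle_w=h(x)$ applied with $h=h_x$ yields $\|h_x\|_w^2=h_x(x)$; the direct computation of $\|h_x\|_w^2$ from \eqref{h_x} confirms this. The paper does not supply its own proof of this lemma but merely cites \cite[Lemma~3.1]{BK-HJM}, so there is nothing further to compare.
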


Consider now equation \eqref{YX} with $H$ being the Filipovi\'{c} space $H_w$ and $\mC$ being the derivative operator $\partial/\partial x$.
The $C_0$-semigroup generated by the derivative operator on $H_w$ is the semigroup of left-shift operators, i.e. $\mS_x(h)=h(\cdot+x)$ for $x\geq 0$.
It is shown in Filipovi\'{c}~\cite[Equation (5.10)]{filipovic} that for $x\geq 0$, $\|\mS_x\|_{\text{op}}\leq C$ for some constant $C$.
A value for the constant $C$ is found in Benth and Kr\"uhner~\cite[Lemma 3.4]{BK-2015}:

\begin{lemma}\label{shift-op-bound}
For $x\geq 0$, it holds that $\|\mS_x\|_{\text{op}}\leq \sqrt{2 \max \left(1,\int_0^\infty \frac{1}{w(s)}\,ds\right)}$.
\end{lemma}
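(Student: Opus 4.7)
The plan is to bound $\|\mathcal{S}_x h\|_w^2$ for an arbitrary $h \in H_w$ by writing it out in the definition of the Filipovi\'c norm and estimating its two pieces carefully, splitting the $L^2(w)$-contribution of $h'$ into the intervals $[0,x]$ and $[x,\infty)$ rather than estimating each part of $\|\mathcal{S}_x h\|_w^2$ against the full $\|h\|_w^2$.

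Concretely, I would start from $(\mathcal{S}_x h)(y) = h(x+y)$, so that $(\mathcal{S}_x h)(0) = h(x)$ and $(\mathcal{S}_x h)'(y) = h'(x+y)$, which gives
\begin{equation*}
\|\mathcal{S}_x h\|_w^2 = |h(x)|^2 + \int_0^\infty |h'(x+y)|^2 w(y)\,dy.
\end{equation*}
For the first term, since $h$ has an absolutely continuous version, $h(x) = h(0) + \int_0^x h'(s)\,ds$, and the elementary inequality $(a+b)^2 \le 2a^2 + 2b^2$ together with Cauchy--Schwarz applied to $\int_0^x h'(s)\, w(s)^{1/2}\, w(s)^{-1/2}\, ds$ yields
\begin{equation*}
|h(x)|^2 \le 2|h(0)|^2 + 2\left(\int_0^x \frac{1}{w(s)}\,ds\right)\int_0^x |h'(s)|^2 w(s)\,ds.
\end{equation*}
For the second term, the substitution $s = x+y$ and monotonicity of $w$ (so $w(s-x) \le w(s)$) give
\begin{equation*}
\int_0^\infty |h'(x+y)|^2 w(y)\,dy = \int_x^\infty |h'(s)|^2 w(s-x)\,ds \le \int_x^\infty |h'(s)|^2 w(s)\,ds.
\end{equation*}

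Now I would combine the two estimates, setting $K := \int_0^\infty w(s)^{-1}\,ds$ (finite by assumption). Summing the two inequalities, the $|h(0)|^2$ appears with coefficient $2$, the integral over $[0,x]$ of $|h'|^2 w$ with coefficient at most $2K$, and the tail integral over $[x,\infty)$ of $|h'|^2 w$ with coefficient $1$. Therefore
\begin{equation*}
\|\mathcal{S}_x h\|_w^2 \le 2|h(0)|^2 + \max(2K,1)\int_0^\infty |h'(s)|^2 w(s)\,ds \le 2\max(1,K)\,\|h\|_w^2,
\end{equation*}
and taking square roots and the supremum over $\|h\|_w = 1$ gives the claim.

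The only subtle point, and what I would call the main obstacle, is producing the constant $2\max(1,K)$ rather than a weaker $2+K$: if one estimates $\int_0^\infty |h'(x+y)|^2 w(y)\,dy$ against the full $\int_0^\infty |h'|^2 w$ and also bounds $|h(x)|^2$ against the full $\|h\|_w^2$, the $\int_0^x |h'|^2 w$ contribution gets double-counted. The trick is to split the derivative integral at $x$ so that the Cauchy--Schwarz factor $2\int_0^x w^{-1}$ loads only the $[0,x]$ piece while the shifted-tail estimate loads only the $[x,\infty)$ piece, after which taking the max over the three coefficients $2$, $2K$, $1$ produces exactly $2\max(1,K)$.
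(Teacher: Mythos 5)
Your proof is correct. The paper does not prove this lemma itself but simply cites Benth and Kr\"uhner (2015, Lemma~3.4), and your argument is precisely the computation behind that cited result: the splitting of $\int_0^\infty|h'|^2w$ at $x$ so that the Cauchy--Schwarz constant $2\int_0^x w^{-1}\le 2K$ loads only the $[0,x]$ piece while the monotonicity bound $w(s-x)\le w(s)$ handles the tail, yielding the sharp constant $2\max(1,K)$ rather than the cruder $2+K$. No gaps.
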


While on $L^2(\R)$ the adjoint of the left-shift operator is the right-shift operator, this is not the case on $H_w$.
The following lemma gives the adjoint of the left-shift operator on $H_w$.

\begin{lemma}
For $x\geq 0$, let $\mS_x$ denote the left-shift operator on $H_w$, defined by $\mS_x h=h(\cdot+x)$. 
The adjoint operator $\mS_x^*$ of $\mS_x$, defined by the relation $\<\mS_x f,g\>_w = \<f,\mS_x^* g\>_w$ for all $f,g\in H_w$, is given by
\begin{equation}
\label{shift-adjoint}
\mS_x^* g(y) = 
\left\{
\begin{alignedat}{2}
 	&g(0)\left(1 + \int_0^y \frac{1}{w(s)}\,ds\right), \qquad &0\leq y \leq x,
 	\\
 	&g(0)\left(1 + \int_0^x \frac{1}{w(s)}\,ds\right) + \int_0^{y-x} \frac{w(s)}{w(s+x)}g'(s)\,ds, \qquad &y>x.
\end{alignedat}
\right.
\end{equation}
\end{lemma}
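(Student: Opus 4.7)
The plan is to work directly from the definition of the adjoint, unfolding the inner product $\langle \mS_x f, g\rangle_w$ and rearranging it into the form $\langle f, \mS_x^* g\rangle_w$, from which we can read off the value of $\mS_x^*g$ at $0$ and its weak derivative, and then reconstruct $\mS_x^*g$ by integration.

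Concretely, fix $x\geq 0$ and $f,g\in H_w$. Writing out the inner product, I get
\begin{equation*}
\langle \mS_x f, g\rangle_w = f(x)\,g(0) + \int_0^\infty w(y)\,f'(y+x)\,g'(y)\,dy.
\end{equation*}
First, I would expand $f(x) = f(0) + \int_0^x f'(s)\,ds$ to isolate the ``$f(0)$'' term, which is forced to equal $f(0)(\mS_x^* g)(0)$; this yields $(\mS_x^* g)(0) = g(0)$. Next, I would substitute $u = y+x$ in the second integral to convert it into $\int_x^\infty w(u-x)\,f'(u)\,g'(u-x)\,du$, so that in both integrals $f'$ is evaluated at a common variable. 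Splitting the target integral $\int_0^\infty w(y)\,f'(y)\,(\mS_x^*g)'(y)\,dy$ at $x$ and matching densities against $f'$ on $[0,x]$ and $(x,\infty)$ respectively forces
\begin{equation*}
(\mS_x^* g)'(y) = \frac{g(0)}{w(y)} \text{ for } 0\le y\le x, \qquad (\mS_x^* g)'(y) = \frac{w(y-x)}{w(y)}\,g'(y-x) \text{ for } y>x.
\end{equation*}
Integrating from $0$ using $(\mS_x^* g)(0)=g(0)$ gives exactly the two-branch formula \eqref{shift-adjoint} (with the substitution $u=s-x$ in the second piece).

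Finally, I would verify that this construction actually defines a bounded operator on $H_w$: one needs $\mS_x^* g\in H_w$, which reduces to checking $\int_0^\infty w(y)|(\mS_x^* g)'(y)|^2\,dy<\infty$. Using the explicit derivative above and the assumption $\int_0^\infty w(s)^{-1}ds<\infty$, the integral over $[0,x]$ is bounded by $|g(0)|^2\int_0^x w(s)^{-1}ds$ and the integral over $(x,\infty)$ is bounded by $\int_0^\infty w(u)|g'(u)|^2\,du$ because $w(u)/w(u+x)\le 1$ by monotonicity of $w$, so $\mS_x^* g$ lies in $H_w$ with $\|\mS_x^* g\|_w\lesssim \|g\|_w$.

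The only mildly delicate point is the change of variable and the splitting of the integral at $y=x$: I need to be sure that $(\mS_x^*g)'$ defined piecewise as above is indeed the weak derivative of the piecewise formula \eqref{shift-adjoint}, which follows because the two branches agree at $y=x$ (both give $g(0)(1+\int_0^x w(s)^{-1}ds)$), so the resulting function is absolutely continuous and its classical derivative coincides almost everywhere with the piecewise expression. No deep obstacle is expected; the argument is essentially a careful bookkeeping with the fundamental theorem of calculus and one substitution.
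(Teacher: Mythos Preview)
Your argument is correct and complete. The one point worth tightening is the ``matching densities against $f'$'' step: to make this rigorous you should phrase it as \emph{defining} the candidate $\mS_x^*g$ by the piecewise formula, checking (as you do) that it lies in $H_w$, and then verifying $\langle \mS_x f,g\rangle_w=\langle f,\mS_x^*g\rangle_w$ by reversing the computation; uniqueness of the adjoint then finishes. You essentially say this, but the word ``forces'' in the middle of the argument suggests you are deducing rather than constructing, which would require knowing a priori that $\mS_x^*g$ has a weak derivative.

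Your route differs from the paper's. The paper invokes a general representation formula for adjoints on $H_w$ from Benth--Kr\"uhner (their Proposition~3.8), namely
\[
\mT^*g(y)=g(0)\,(\mT h_y)(0)+\int_0^\infty (\mT h_y)'(s)\,w(s)\,g'(s)\,ds,
\]
and then simply computes $(\mS_x h_y)(0)=h_y(x)$ and $(\mS_x h_y)'(s)w(s)=\tfrac{w(s)}{w(s+x)}1_{\{s\le y-x\}}$ to read off the answer. That formula itself is proved by the observation $(\mT^*g)(y)=\langle g,\mT h_y\rangle_w$, which is the reproducing-kernel version of exactly the computation you carry out by hand. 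So the two proofs are the same calculation seen from different ends: the paper's is shorter on the page because the work is outsourced to an external lemma, while yours is self-contained and makes the mechanism (fundamental theorem of calculus plus one substitution) fully visible. Your additional check that $\|\mS_x^*g\|_w\lesssim\|g\|_w$, using $w(u)/w(u+x)\le 1$, is a nice bonus the paper does not spell out.
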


\begin{proof}
From Proposition 3.8 in \cite{BK-2015}, we have that for an operator $\mT\in L(H_w)$, the adjoint operator $\mT^*$ is given by
\begin{equation*}
\mT^* g(y) = g(0)\eta(y) + \int_0^\infty q(y,s)g'(s)\,ds,
\end{equation*}
where
\begin{equation*}
\eta(y)\coloneqq (\mT h_y)(0)
\end{equation*}
and 
\begin{equation*}
q(y,s)\coloneqq (\mT h_y)'(s)w(s).
\end{equation*}
For the left-shift operator we find that
\begin{equation*}
\eta(y) = (\mS_x h_y)(0) = h_y(x) = \int_0^{y\wedge x}\frac{1}{w(s)}\,ds,\qquad y\geq 0.
\end{equation*}
From \cite[p. 78]{filipovic} we have that $(\mS_x h)'=\mS_x h'$. 
Hence, for $y,s\geq 0$,
\begin{align*}
q(y,s) &= (\mS_x h_y)'(s) w(s) = \mS_x h_y'(s) w(s) \\
&= \mS_x\left(\frac{1}{w(s)}1_{\{s\leq y\}}\right) w(s) = \frac{1}{w(s+x)}1_{\{s+x\leq y\}} w(s)\\
&= \frac{w(s)}{w(s+x)}1_{\{s\leq y-x\}}.
\end{align*}
This gives
\begin{equation*}
\mS_x^* g(y) = g(0)\left(1+\int_0^{y\wedge x}\frac{1}{w(s)}\,ds\right) + \int_0^{(y-x)\vee 0} \frac{w(s)}{w(s+x)}g'(s)\,ds,
\end{equation*}
and the result follows.
\end{proof}

We define the forward price as 
\begin{equation}\label{musiela}
f(t,x)\coloneqq \delta_x(X_t), \qquad t\in[0,T],\,x\in\R^+.
\end{equation}
Observe that the covariance between the prices of two forward contracts with different times to delivery follows from Proposition \ref{charfunc_X}, see \cite{BS}.

\begin{corollary}
For all $x,y\in\mathbb{R}_+$, we have
\begin{align*}
\textnormal{Cov}\big(f(t,x),\, f(t,y)\big)=\E\left[\int_0^t\|Q_B^{1/2}Z_s\|_w^2\,\delta_{x+t-s}(Y_s)\delta_{y+t-s}(Y_s)\,ds\right].
\end{align*}
\end{corollary}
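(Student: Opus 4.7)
The plan is to leverage the identity $f(t,x)=\delta_x(X_t)=\langle X_t,h_x\rangle_w$ together with the formula \eqref{eq:covar-X_t} for the covariance operator of $X_t$. Since $\E[X_t]=\mS_t x_0$ is deterministic, one has the standard identity $\textnormal{Cov}(f(t,x),f(t,y))=\langle Q_{X_t}h_x,h_y\rangle_w$, so the proof reduces to evaluating this pairing.

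First I would substitute the expression for $Q_{X_t}$ from \eqref{eq:covar-X_t} and commute the inner product against $h_y$ with the Bochner integral in $s$, then commute the inner product with the expectation inside the integrand. Both steps are justified because $T\mapsto \langle Th_x,h_y\rangle_w$ is a continuous linear functional on $L(H_w)$ and the integrand is Bochner integrable by the same hypotheses that make \eqref{eq:covar-X_t} well-defined. This yields
\begin{equation*}
\textnormal{Cov}(f(t,x),f(t,y))=\int_0^t \E\left[\|Q_B^{1/2}Z_s\|_w^2\,\langle \mS_{t-s}\mV_s\mS^*_{t-s}h_x,h_y\rangle_w\right]ds.
\end{equation*}

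The main computation is then the identification of the integrand. Writing $\mV_s=Y_s\x Y_s$ as the rank-one operator $h\mapsto\langle Y_s,h\rangle_w Y_s$, I obtain
\begin{equation*}
\mV_s\mS^*_{t-s}h_x = \langle Y_s,\mS^*_{t-s}h_x\rangle_w\,Y_s = \langle \mS_{t-s}Y_s,h_x\rangle_w\,Y_s,
\end{equation*}
and using the reproducing property $\langle h,h_z\rangle_w=h(z)$ together with the fact that $\mS_{t-s}$ is the left-shift semigroup on $H_w$, the scalar coefficient equals $(\mS_{t-s}Y_s)(x)=Y_s(x+t-s)=\delta_{x+t-s}(Y_s)$. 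Applying $\mS_{t-s}$ to the remaining factor $Y_s$ and pairing with $h_y$ produces the symmetric factor $\delta_{y+t-s}(Y_s)$, so that
\begin{equation*}
\langle \mS_{t-s}\mV_s\mS^*_{t-s}h_x,h_y\rangle_w=\delta_{x+t-s}(Y_s)\,\delta_{y+t-s}(Y_s).
\end{equation*}

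A final Fubini step, justified by the same integrability underlying \eqref{eq:covar-X_t}, moves the expectation outside the $ds$-integral to deliver the stated formula. There is no substantial obstacle; the only points that deserve care are the successive commutations of expectation, Bochner integral, and inner product, all of which are routine consequences of the continuity of bounded linear functionals on $L(H_w)$.
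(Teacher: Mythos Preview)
Your argument is correct and essentially matches the paper's intended route: the paper states the corollary as a direct consequence of Proposition~\ref{charfunc_X} and the covariance formula \eqref{eq:covar-X_t} derived in \cite{BS}, and your computation of $\langle Q_{X_t}h_x,h_y\rangle_w$ via the rank-one structure $\mV_s=Y_s\x Y_s$ and the shift/evaluation identity $\langle \mS_{t-s}Y_s,h_z\rangle_w=\delta_{z+t-s}(Y_s)$ is exactly how that reduction unfolds.
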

In the special case when $Z_t=\gamma\in H_w$ with $\|\gamma\|_w=1$, it is shown in \cite{BS} 
that
\begin{align*}
\textnormal{Cov}(f(t,x),f(t,y))&=\|Q_B^{1/2}\gamma\|_w^2\int_0^t\delta_{y+t-s}(\mU_sy_0)^{\otimes 2}\delta_{x+t-s}^*(1)\,ds \\
&\qquad+\|Q_B^{1/2}\gamma\|_w^2\int_0^t\delta_{y+t-s}\left(\int_0^s\mU_u\eta Q_W\eta^*\mU^*(u)\,du\right)\delta^*_{x+t-s}(1)\,ds,
\end{align*}
by applying Theorem 8.7(iv) of Peszat and Zabczyk~\cite{PZ}.

\subsection{Commodities with delivery period}
In the case of commodities with delivery over a period of time $[T_1, T_2]$, as for electricity, the forward price $G(t,T_1,T_2)$ at time $t\leq T_1$ is modelled by
\begin{equation*}
 G(t,T_1,T_2) \coloneqq \frac{1}{T_2-T_1} \int_{T_1}^{T_2} F(t,T)\,dT.
\end{equation*}

Hereafter, we study different representations of the forward price.
For this we introduce two integration functionals, $\mI_d$ and $\mJ_{x,d}$.
For $d\geq 0$, $\mI_d:H_w\rightarrow \R$ is defined as
\begin{equation}\label{I}
\mI_d(h)\coloneqq \frac{1}{d}\int_0^d h(u)\,du, \qquad h\in H_w.
\end{equation}
For $x,d\geq 0$, $\mJ_{x,d}:H_w\rightarrow \R$ is defined as 
\begin{equation}\label{J}
\mJ_{x,d}(h)\coloneqq \frac{1}{d}\int_0^d \delta_{x+u}(h)\,du, \qquad h\in H_w.
\end{equation}
We can see that $\mJ_{x,d} = \mI_d\circ \mS_x$, where $\mS_x$ is the left-shift operator on $H_w$. 
Using the Musiela notation we let $x$ denote the time to the start of the delivery period and $d$ denote the length of the delivery period.
Then $[T_1,T_2]=[t+x,t+x+d]$, and the forward price can be written as
\begin{equation}\label{g}
g(t,x,d)\coloneqq \mJ_{x,d}(X_t) = \frac{1}{d} \int_0^d f(t,x+u)\,du, \qquad t\in[0,T_1],\, x\in\R^+, d\in\R^+.
\end{equation}
The covariance between the prices of two forward contracts with different times $x$ and $y$ to the start of the delivery period and different lengths $d_1$ and $d_2$ of the delivery period was found in \cite{BS}:
\begin{equation*}
\textnormal{Cov}\big(g(t,x,d_1),\, g(t,y,d_2)\big)=\frac{1}{d_1 d_2}\int_0^{d_1}\int_0^{d_2}\int_0^t\E\left[\|Q_B^{1/2}Z_s\|_w^2\,\delta_{x+t-s+u}(Y_s)\delta_{y+t-s+v}(Y_s)\right]\,ds\,dv\,du.
\end{equation*}

The next two results present some properties of the functionals $\mI_d$ and $\mJ_{x,d}$ defined in \eqref{I} and \eqref{J}.

\begin{lemma}
The integration functionals $\mI_d$ and $\mJ_{x,d}$ defined in \eqref{I} and \eqref{J} are well-defined continuous linear functionals on $H_w$, i.e. $\mI_d, \mJ_{x,d}\in H_w^*$.
\end{lemma}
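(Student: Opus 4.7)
The plan is to reduce everything to the bound on point evaluations that follows from the fact that $\delta_u \in H_w^*$, and then use $\mJ_{x,d}=\mI_d\circ\mS_x$ together with Lemma~\ref{shift-op-bound} to transfer the result from $\mI_d$ to $\mJ_{x,d}$.

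First, I would handle well-definedness: for every $h\in H_w$ the absolutely continuous version is pointwise defined and, since $\delta_u\in H_w^*$ for each $u\ge 0$, the map $u\mapsto h(u)$ is bounded on compact sets. In particular the Lebesgue integrals appearing in \eqref{I} and \eqref{J} make sense. Linearity of both functionals is immediate from linearity of the integral and of $\delta_{x+u}$.

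For continuity, the key estimate is the uniform-in-$u$ bound on the evaluation norms. By the lemma preceding \eqref{musiela}, $\|\delta_u\|_*^2 = h_u(u) = 1 + \int_0^u w(s)^{-1}\,ds$, which is bounded by the constant
\[
M \coloneqq \sqrt{\,1 + \int_0^\infty w(s)^{-1}\,ds\,} < \infty,
\]
using the standing assumption on $w$. Therefore $|h(u)| = |\delta_u(h)| \le M\,\|h\|_w$ for every $u\ge 0$ and every $h\in H_w$. Applying this pointwise inside \eqref{I} gives
\[
|\mI_d(h)| \le \frac{1}{d}\int_0^d |h(u)|\,du \le M\,\|h\|_w,
\]
so $\mI_d \in H_w^*$. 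For $\mJ_{x,d}$ I would observe the factorisation $\mJ_{x,d} = \mI_d\circ \mS_x$ noted right after \eqref{J}; combining the bound just established with Lemma~\ref{shift-op-bound} yields
\[
|\mJ_{x,d}(h)| \le \|\mI_d\|_*\,\|\mS_x\|_{\text{op}}\,\|h\|_w \le M\,\sqrt{2\max\!\left(1,\int_0^\infty w(s)^{-1}\,ds\right)}\,\|h\|_w,
\]
proving $\mJ_{x,d}\in H_w^*$.

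There is no real obstacle here: the only thing one has to notice is that the Filipovi\'c weight assumption $\int_0^\infty w^{-1}<\infty$ is precisely what makes $\sup_{u\ge 0}\|\delta_u\|_*$ finite, and this uniform bound is what lets one pass the integrals in \eqref{I} and \eqref{J} through to an estimate in terms of $\|h\|_w$.
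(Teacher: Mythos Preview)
Your argument is correct and follows essentially the same route as the paper: bound $|\mI_d(h)|$ via $\int_0^d\|\delta_u\|_*\,du\cdot\|h\|_w$ and then obtain $\mJ_{x,d}\in H_w^*$ from the factorisation $\mJ_{x,d}=\mI_d\circ\mS_x$ and the boundedness of $\mS_x$. The only (harmless) difference is that the paper merely invokes local integrability of $u\mapsto\|\delta_u\|_*$ on $[0,d]$, whereas you use the sharper observation that $\sup_{u\ge 0}\|\delta_u\|_*<\infty$ under the Filipovi\'c weight assumption, yielding an explicit $d$-independent constant.
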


\begin{proof}
Since $h\in H_w$ is locally integrable, $\mI_d$ is a well-defined linear functional on $H_w$.
We show that it is bounded.
For $h\in H_w$,
 \begin{equation*}
|\mI_d(h)| \leq \frac{1}{d}\int_0^d|\delta_{u}(h)|\,du \leq \frac{1}{d}\left(\int_0^d \|\delta_{u}\|_*\,du\right)\|h\|_w
\end{equation*}
holds.
Since $x\mapsto \|\delta_x\|_*$ is locally integrable, it follows that $\mI_d$ is bounded, i.e. $\mI_d\in H_w^*$.
Since $\mS_t$ is continuous, it follows that $\mJ_{x,d}\in H_w^*$.
\end{proof}

\begin{lemma}
\label{h_x,d}
For any $h\in H_w$, we have that
\begin{equation*}
\mJ_{x,d}(h) = \<h, h_{x,d}\>_w,
\end{equation*}
where 
\begin{equation}
\label{eq:h_x,d}
h_{x,d}(y) = 
\left\{
\begin{alignedat}{3}
 	&1+\int_0^y\frac{1}{w(s)}\,ds, \qquad &0\leq y \leq x,
 	\\
 	&1+\int_0^x\frac{1}{w(s)}\,ds + \int_0^{y-x} \frac{d-s}{dw(s+x)} \,ds, \qquad &x<y\leq x+d,
 	\\
 	&1+\int_0^x\frac{1}{w(s)}\,ds + \int_0^d \frac{d-s}{dw(s+x)} \,ds, \qquad &y>x+d.
\end{alignedat}
\right.
\end{equation}
\end{lemma}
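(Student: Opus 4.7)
My plan is to identify the Riesz representative $h_{x,d}$ of the functional $\mJ_{x,d}$ by averaging the already-known Riesz representatives $h_{x+u}$ of the point evaluations $\delta_{x+u}$. Since $\mJ_{x,d}(h)=\frac{1}{d}\int_0^d \delta_{x+u}(h)\,du=\frac{1}{d}\int_0^d \langle h,h_{x+u}\rangle_w\,du$, the natural candidate is
\[
h_{x,d} \;=\; \frac{1}{d}\int_0^d h_{x+u}\,du,
\]
interpreted as a Bochner integral in $H_w$. I would first justify this Bochner integral: the map $u\mapsto h_{x+u}$ is continuous into $H_w$ (the norms $\|h_{x+u}\|_w^2=h_{x+u}(x+u)$ are bounded on $[0,d]$ by Lemma stating $\|\delta_{x+u}\|_\ast^2=h_{x+u}(x+u)$), so it is Bochner integrable. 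Then continuity of $\langle h,\cdot\rangle_w$ lets me pull the inner product inside the integral, giving $\mJ_{x,d}(h)=\langle h,h_{x,d}\rangle_w$.

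Next I would identify $h_{x,d}$ as a pointwise function on $\R^+$. Since $\delta_y$ is continuous on $H_w$, it also commutes with the Bochner integral, so
\[
h_{x,d}(y)=\delta_y(h_{x,d})=\frac{1}{d}\int_0^d h_{x+u}(y)\,du
=1+\frac{1}{d}\int_0^d \int_0^{(x+u)\wedge y}\frac{1}{w(s)}\,ds\,du,
\]
using formula \eqref{h_x}. It then remains to evaluate this double integral in the three regions of $y$ indicated in \eqref{eq:h_x,d}.

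The regions $0\le y\le x$ and $y>x+d$ are immediate: in the first, $(x+u)\wedge y=y$ for every $u\in[0,d]$, and in the third, $(x+u)\wedge y=x+u$ for every $u\in[0,d]$, so a change of variables $s\mapsto s+x$ and Fubini in the form $\int_0^d \int_0^u g(s)\,ds\,du=\int_0^d (d-s)g(s)\,ds$ produce exactly the first and third branches of \eqref{eq:h_x,d}. The intermediate region $x<y\le x+d$ is the main computational step: I split the $u$-integral at $u=y-x$, so that $(x+u)\wedge y=x+u$ for $u<y-x$ and equals $y$ for $u\ge y-x$, then apply Fubini on each piece and the substitution $s\mapsto s+x$. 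After collecting terms, the coefficients of $\int_0^x \tfrac{1}{w(s)}\,ds$ add up to $d$, and the two contributions involving $\tfrac{1}{w(s+x)}$ combine into $\int_0^{y-x}\tfrac{d-s}{w(s+x)}\,ds$, which after dividing by $d$ yields the middle branch of \eqref{eq:h_x,d}.

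The only step with any subtlety is the bookkeeping in the middle case, where two separate double integrals must be reduced and merged; the rest is essentially an application of Fubini and continuity of linear functionals. No completely new idea is required beyond combining the Riesz representation of $\delta_{x+u}$ from \eqref{h_x} with the observation that $\mJ_{x,d}$ is a continuous linear average of the $\delta_{x+u}$'s.
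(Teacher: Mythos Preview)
Your argument is correct, and it takes a genuinely different route from the paper's. The paper exploits the factorisation $\mJ_{x,d}=\mI_d\circ\mS_x$: since $\mI_d=\langle\,\cdot\,,h_d^{\mI}\rangle_w$ with $h_d^{\mI}$ given explicitly by Filipovi\'{c}'s formula \eqref{h_d^I}, one has $h_{x,d}=\mS_x^* h_d^{\mI}$, and the explicit adjoint formula \eqref{shift-adjoint} derived in the preceding lemma then yields \eqref{eq:h_x,d} in one stroke. Your approach instead represents $h_{x,d}$ as the Bochner average $\tfrac{1}{d}\int_0^d h_{x+u}\,du$ and evaluates it pointwise via $\delta_y$, reducing everything to the elementary double integral $\tfrac{1}{d}\int_0^d\int_0^{(x+u)\wedge y}\tfrac{1}{w(s)}\,ds\,du$, which you split correctly in the three regimes. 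Your method is more self-contained (it does not rely on the adjoint lemma for $\mS_x$), while the paper's is more structural and recycles machinery already in place. One small point: the parenthetical justification you give for Bochner integrability (``the norms are bounded'') is not by itself sufficient; you should note that $u\mapsto h_{x+u}$ is actually continuous in $H_w$, since $h_{x+u}(0)=1$ for all $u$ and $\int_0^\infty w(s)\,|h'_{x+u}(s)-h'_{x+v}(s)|^2\,ds=\int_{x+u\wedge v}^{x+u\vee v}\tfrac{1}{w(s)}\,ds\to 0$ as $|u-v|\to 0$. With that in hand, Bochner integrability on $[0,d]$ is immediate and the rest of your argument goes through.
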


\begin{proof}
Since $\mI_d\in H_w^*$, we have that $\mI_d = \<\cdot,h_d^{\mI}\>_w$ for some element $h_d^{\mI}\in H_w$.
From Filipovi\'{c} \cite[Lemma 3.11]{filipovic} we have
\begin{equation} 
\label{h_d^I}
h_d^{\mI}(y) = 1 + \frac{1}{d}\int_0^y \frac{d-s\wedge d}{w(s)}\,ds.
\end{equation}
Since also $\mJ_{x,d}\in H_w^*$, we have that $\mJ_{x,d} = \<\cdot,h_{x,d}\>_w$ for some $h_{x,d}\in H_w$.
We see that 
\begin{equation*}
\mJ_{x,d}(h) = \mI_d(\mS_x(h)) = \left\<\mS_x(h),h_d^{\mI}\right\>_w = \left\<h,\mS_x^*(h_d^{\mI})\right\>_w,
\end{equation*}
which gives $h_{x,d} = \mS_x^*(h_d^{\mI})$.
Since $h_d^{\mI}(0)=1$ and
\begin{equation*}
(h_d^{\mI})'(y) = \frac{1}{d}\frac{d-y\wedge d}{w(y)} = \frac{d-y}{dw(y)} 1_{\{0\leq y\leq d\}},
\end{equation*}
we get by \eqref{shift-adjoint} that
\begin{align*}
h_{x,d}(y) &= \mS_x^* h_d^{\mI}(y) \\
&= h_d^{\mI}(0)\left(1+\int_0^{y\wedge x}\frac{1}{w(s)}\,ds\right) + \int_0^{(y-x)\vee 0} \frac{w(s)}{w(s+x)}(h_d^{\mI})'(s)\,ds\\
&= 1+\int_0^{y\wedge x}\frac{1}{w(s)}\,ds + \int_0^{(y-x)\vee 0} \frac{w(s)}{w(s+x)}\frac{d-s}{dw(s)} 1_{\{0\leq s\leq d\}}\,ds\\
&= 1+\int_0^{y\wedge x}\frac{1}{w(s)}\,ds + \int_0^{((y-x)\vee 0)\wedge d} \frac{d-s}{dw(s+x)} \,ds.
\end{align*}
\end{proof}

The following corollary gives a representation for the forward price as an inner product between $X_t$ and an element of $H_w$.

\begin{corollary}
The forward price can be expressed as 
\begin{equation*}
g(t,x,d) = \<X_t, h_{x,d}\>_w,
\end{equation*}
where the function $h_{x,d}$ is given in \eqref{eq:h_x,d}.
\end{corollary}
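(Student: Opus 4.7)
The plan is to chain together the definition of the forward price with the representation of the functional $\mJ_{x,d}$ just established. By equation \eqref{g}, we have $g(t,x,d) = \mJ_{x,d}(X_t)$ directly from the Musiela reformulation of the integrated forward price. Since $X_t$ takes values in $H_w$ almost surely, I can evaluate the functional at the argument $X_t$.

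Next, I would invoke Lemma \ref{h_x,d}, which asserts that for every $h \in H_w$ the identity $\mJ_{x,d}(h) = \<h, h_{x,d}\>_w$ holds, with the Riesz representer $h_{x,d}$ given explicitly by \eqref{eq:h_x,d}. Applying this pointwise (in $\omega$) with $h = X_t(\omega) \in H_w$ yields $g(t,x,d) = \<X_t, h_{x,d}\>_w$, which is the claimed representation.

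There is essentially no obstacle: the corollary is a one-line substitution of $X_t$ into the identity of the preceding lemma, and the measurability of the right-hand side as a random variable is automatic since $X_t$ is an $H_w$-valued random variable and $\<\,\cdot\,, h_{x,d}\>_w$ is a continuous linear functional on $H_w$. The only thing worth noting explicitly in the write-up is that combining \eqref{g} with Lemma \ref{h_x,d} gives the conclusion immediately.
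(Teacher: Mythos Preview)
Your proposal is correct and matches the paper's approach: the corollary is stated without proof in the paper precisely because it follows immediately from combining the definition \eqref{g} with Lemma~\ref{h_x,d}, exactly as you outline.
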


\section{Sensitivity analysis} \label{section:sensitivity}

Consider an option written on a forward contract with delivery period $[T_1, T_2]$ as described in the previous section. 
Let $\tau$ denote the exercise time of the option, where $0\leq \tau\leq T_1$.
Let $\Phi:\R\rightarrow\R^{+}$ denote the payoff function, which is assumed to be measurable and of at most linear growth.
The price of the option at time $t\leq \tau$ is represented as
\begin{equation*}
\Pi_t = e^{-r(\tau-t)} \E\left[\Phi(G(\tau,T_1,T_2))\,|\,\mF_{t}\right],
\end{equation*}
where $r$ is a constant instantaneous interest rate.
Note that we are taking the expectation under the market probability measure $\bP$ and not a risk-neutral probability measure.
The reason for this is that electricity is not storable, hence it is not a tradeable asset in the usual sense, see \cite{BSBK}.
The standard argument of using a risk-neutral probability measure to prevent arbitrage opportunities is therefore not valid in this case, and any equivalent martingale measure can be used for option pricing.

As before we adopt the Musiela notation with $x=T_1-\tau$ and $d=T_2-T_1$. Recalling \eqref{g}, the option price can then be written as
\begin{equation}
\label{option-price}
 \Pi_t = e^{-r(\tau-t)} \E\left[\Phi(g(\tau,x,d))\,|\,\mF_{t}\right] = e^{-r(\tau-t)} \E\left[(\Phi\circ\mJ_{x,d})(X_\tau)\,|\,\mF_{t}\right],
\end{equation}
where the functional $\mJ_{x,d}$ is defined in \eqref{J}.
The option price at time $t=0$ is given by
\begin{equation}
 \Pi_0 = \e^{-r\tau}\E\left[(\Phi\circ\mJ_{x,d})(X_\tau)\right].
\end{equation}

In sensitivity analysis we are interested in the derivatives of the option price with respect to different parameters of the underlying price model.
In finance, these derivatives are called \emph{Greeks} since they are denoted by greek letters.
Examples are the delta, which is the derivative with respect to the initial value of the underlying asset, and the vega, which is the derivative with respect to the volatility.
Our framework presents "parameters" in infinite dimensions, e.g. the initial forward price is a function in $H_w$.
Thus we have to reinterpret the meaning of the Greeks.
For the delta, a natural choice is to take inspiration from \cite{BDHP} and interpret it as a directional derivative. 
For the vega however, there is no natural generalization to our framework.
We choose to compute the directional derivatives with respect to the "parameters" $y_0$ and $\eta$ of the stochastic volatility model instead.

We now consider the option price at time $t=0$ as a function of the initial value $x_0$ of $X_t$, the initial value $y_0$ of $Y_t$ and the volatility $\eta$ of $Y_t$.
In the previous sections we assumed that $\eta\in L(H_w)$.
We recall that the space of bounded linear operators on a Hilbert space is not reflexive.
In fact, it contains a subspace (the diagonal operators with respect to a given orthonormal basis) which is isomorphic to $\ell^{\infty}$, and $\ell^{\infty}$ is not reflexive, see \cite[Theorem 1.11.16, Proposition 1.11.18 and Example 1.11.23]{Megginson}.
Since $L(H_w)$ is not reflexive, it is not a UMD Banach space \cite[p.5]{PV}.
We therefore need to have stronger assumptions on $\eta$ to be able to apply the Malliavin calculus in infinite dimensions and in particular to use the chain rule \cite[Proposition 3.8]{PV} for the Malliavin derivative.
Hence, in this section, we assume that $\eta\in\mH_w\coloneqq L_{HS}(H_w)\subset L(H_w)$.
The option price at time $t=0$ is then a functional on $H_w\times H_w \times \mH_w$ which takes the form
\begin{equation}
\label{Pi_0}
 \Pi_0(x_0,y_0,\eta) = \e^{-r\tau}\E\left[(\Phi\circ\mJ_{x,d})(X_{\tau}(x_0,y_0,\eta))\right] = \E\left[\Psi(X_{\tau}(x_0,y_0,\eta))\right],
\end{equation}
where the notation $X_{\tau}(x_0,y_0,\eta)$ means that we regard the random variable $X_\tau$ as a function of $x_0$, $y_0$ and $\eta$, and to ease the notation, we have introduced the functional $\Psi$ on $H_w$ as $\Psi\coloneqq\e^{-r\tau}(\Phi\circ\mJ_{x,d})$.

We are going to consider $\Pi_0$ as a function of one of the parameters keeping the two others fixed.
For this purpose we will use the notation $\Pi_0^{y_0,\eta}$, $\Pi_0^{x_0,\eta}$ and $\Pi_0^{x_0,y_0}$, where the variables in superscript are the ones we keep fixed.
The directional derivative $\partial_h\Pi_0^{y_0,\eta}(x_0)$ of $\Pi_0^{y_0,\eta}$ at $x_0\in H_w$ in direction $h\in H_w$ is defined as
\begin{equation}\label{directional-derivative}
\partial_{h} \Pi_0^{y_0,\eta}(x_0) \coloneqq \frac{d}{d\varepsilon} \Pi_0^{y_0,\eta}(x_0+\varepsilon h) \bigg{|}_{\varepsilon=0} = \lim_{\varepsilon\rightarrow 0} \frac{\Pi_0^{y_0,\eta}(x_0 + \varepsilon h) - \Pi_0^{y_0,\eta}(x_0)}{\varepsilon}, \quad h\in H_w.  
\end{equation}
This will be our interpretation of the Greek delta.
The  directional derivative $\partial_h\Pi_0^{x_0,\eta}(y_0)$ is defined similarly. 
The directional derivative of $\Pi_0^{x_0,y_0}$ at $\eta\in \mH_w$ in direction $\zeta\in \mH_w$ is defined as
\begin{equation}\label{directional-derivative-V3}
 \partial_{\zeta} \Pi_0^{x_0,y_0}(\eta) \coloneqq \frac{d}{d\varepsilon} \Pi_0^{x_0,y_0}(\eta+\varepsilon \zeta) \bigg{|}_{\varepsilon=0} = \lim_{\varepsilon\rightarrow 0} \frac{\Pi_0^{x_0,y_0}(\eta + \varepsilon \zeta) - \Pi_0^{x_0,y_0}(\eta)}{\varepsilon}, \quad \zeta\in \mH_w.  
\end{equation}
Before proceeding further, we briefly review some elements of Malliavin calculus on Hilbert spaces.
We refer to \cite{N} and \cite{DPO} for an introduction to Malliavin calculus, and to \cite{CT} and \cite{PV} for more details on Malliavin calculus on Hilbert and Banach spaces.

\subsection{Some elements of Malliavin calculus}
Let us consider an isonormal process $\bW$ on a filtered probability space $(\Omega,\mF,\bF, \bP)$ and some Hilbert space $H$, where $\bF=\{\mF_t\}_{0\leq t\leq T}$ is the filtration generated by $\bW$ and $\mF=\mF_T$.
For all $h_1,\ldots,h_n\in H$, $\bW(h_1),\ldots,\bW(h_n)$ are jointly normally distributed real-valued random variables with mean zero and $\E[\bW(h_i)\bW(h_j)] = \<h_i,h_j\>_H$.

Let $E$ be another Hilbert space, which is assumed to be separable. 
An $E$-valued random variable $F$ is called \emph{smooth} if there exists $h_1,\dots,h_n\in H$ such that $F$ can be written on the form $F = f(\bW(h_1),\dots,\bW(h_n))$, where $f:\R^n\rightarrow E$ is infinitely differentiable and all derivatives are polynomially bounded.
The set of smooth $E$-valued random variables is dense in $L^p(\Omega;E)$ for $1\leq p<\infty$.
The Malliavin derivative $\sD F$ of a smooth $E$-valued random variable $F$ is a random variable taking values in $E\x H$, and it is defined as 
\begin{equation*}
\sD F \coloneqq \sum_{i=1}^n \frac{\partial f}{\partial x_i}(\bW(h_1),\ldots,\bW(h_n)) \x h_i.
\end{equation*}
The space $E\x H$ can be identified with the space $L_{HS}(H,E)$ of Hilbert-Schmidt operators from $H$ to $E$.
In the special case when $E=\R$, $L_{HS}(H,\R)$ can be identified with $H$.
The Malliavin derivative of a real-valued random variable is therefore an $H$-valued random variable.

The Malliavin derivative is closable as an unbounded operator from $L^p(\Omega;E)$ to $L^p(\Omega; E\x H)$, and the closure will also be denoted by $\sD$.
The domain of the closure is denoted by $\bD^{1,p}(E)$, and it becomes a Banach space if we endow it with the following norm:
\begin{equation*}
\| F \|_{\bD^{1,p}(E)} \coloneqq \left(\| F \|_{L^p(\Omega;E)}^p + \| \sD F \|_{L^p(\Omega;E\x H)}^p\right)^{1/p}.
\end{equation*}
The space $\bD^{1,2}(E)$ is a Hilbert space.

We will need the following chain rule. We state it here for Hilbert spaces, but it is also valid in the more general case of UMD Banach spaces, see \cite[Proposition 3.8]{PV}.

\begin{lemma}[Chain rule]\label{chain-rule}
Let $E_1$ and $E_2$ be Hilbert spaces, and suppose $\varphi:E_1 \rightarrow E_2$ is Fr\'{e}chet differentiable with a continuous and bounded derivative. If $F\in \bD^{1,p}(E_1)$, then $\varphi(F)\in \bD^{1,p}(E_2)$ and
\begin{equation*}
\sD(\varphi(F)) = D\varphi(F) \circ \sD F.
\end{equation*}
\end{lemma}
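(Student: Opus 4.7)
The plan is to first verify the identity on smooth random variables by reducing to the classical finite-dimensional chain rule, and then extend to all of $\bD^{1,p}(E_1)$ by a density and closure argument. Throughout, note that the identification $E_2\x H\cong L_{HS}(H,E_2)$ makes the composition $D\varphi(F)\circ\sD F$ sit in the correct target space, since for any bounded operator $B\in L(E_1,E_2)$ and any Hilbert-Schmidt operator $A\in L_{HS}(H,E_1)$ one has $BA\in L_{HS}(H,E_2)$ with $\|BA\|_{HS}\le\|B\|_{\text{op}}\|A\|_{HS}$.

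Step one: let $F=f(\bW(h_1),\dots,\bW(h_n))$ be smooth, with $f:\R^n\to E_1$ infinitely differentiable and polynomially bounded together with its derivatives. Then $\varphi\circ f:\R^n\to E_2$ is Fr\'echet differentiable, and the classical chain rule gives $\partial_i(\varphi\circ f)(x)=D\varphi(f(x))\circ\partial_i f(x)$. Boundedness of $D\varphi$ ensures that $\varphi\circ f$ and its derivatives grow at most polynomially, so $\varphi(F)$ is itself smooth (or at least lies in $\bD^{1,p}(E_2)$), and the definition of $\sD$ yields
\begin{equation*}
\sD(\varphi(F))=\sum_{i=1}^n D\varphi(F)\circ\partial_i f(\bW(h_1),\dots,\bW(h_n))\x h_i=D\varphi(F)\circ\sD F.
\end{equation*}

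Step two: for general $F\in\bD^{1,p}(E_1)$, choose a sequence of smooth random variables $F_n$ with $F_n\to F$ in $L^p(\Omega;E_1)$ and $\sD F_n\to\sD F$ in $L^p(\Omega;L_{HS}(H,E_1))$. Passing to a subsequence we may also assume a.s. convergence of both. Since $D\varphi$ is bounded, $\varphi$ is globally Lipschitz with constant $\|D\varphi\|_\infty$, so $\varphi(F_n)\to\varphi(F)$ in $L^p(\Omega;E_2)$. For the derivatives I split
\begin{equation*}
D\varphi(F_n)\circ\sD F_n-D\varphi(F)\circ\sD F=D\varphi(F_n)\circ(\sD F_n-\sD F)+\big(D\varphi(F_n)-D\varphi(F)\big)\circ\sD F.
\end{equation*}
The Hilbert-Schmidt bound above controls the first term in $L^p$ by $\|D\varphi\|_\infty\|\sD F_n-\sD F\|_{L^p(\Omega;L_{HS})}\to0$.

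Step three, which I expect to be the main technical point, is the second term. A.s.\ convergence $F_n\to F$ combined with continuity of $D\varphi$ gives $D\varphi(F_n)\to D\varphi(F)$ in operator norm pointwise, hence $\|(D\varphi(F_n)-D\varphi(F))\circ\sD F\|_{L_{HS}(H,E_2)}\to0$ a.s. Since this expression is dominated by $2\|D\varphi\|_\infty\|\sD F\|_{L_{HS}(H,E_1)}$, which is $p$-integrable as $F\in\bD^{1,p}(E_1)$, the dominated convergence theorem yields $L^p$ convergence to zero. Combining both pieces, $D\varphi(F_n)\circ\sD F_n\to D\varphi(F)\circ\sD F$ in $L^p(\Omega;L_{HS}(H,E_2))$. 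Since $\varphi(F_n)\to\varphi(F)$ in $L^p(\Omega;E_2)$ and $\sD$ is closed from $L^p(\Omega;E_2)$ into $L^p(\Omega;L_{HS}(H,E_2))$, the closedness forces $\varphi(F)\in\bD^{1,p}(E_2)$ with $\sD(\varphi(F))=D\varphi(F)\circ\sD F$, concluding the proof.
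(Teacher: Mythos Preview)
Your argument is essentially correct and follows the standard density--closure route; the paper itself does not prove this lemma but simply cites \cite[Proposition~3.8]{PV}, so you have supplied what the paper outsources.

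One point in Step~one deserves a sentence of care. Since $\varphi$ is only assumed $C^1$, the composite $\varphi\circ f:\R^n\to E_2$ is in general only $C^1$, so $\varphi(F)$ need not be a smooth random variable in the sense defined (which requires $f$ to be $C^\infty$). Your parenthetical ``or at least lies in $\bD^{1,p}(E_2)$'' is the right instinct, but to justify it you need a small approximation already at this stage: mollify $g\coloneqq\varphi\circ f$ in $\R^n$ by setting $g_k=g*\rho_k$ with a standard mollifier $\rho_k$ (Bochner integration handles the $E_2$-values). Each $g_k$ is $C^\infty$ with all derivatives polynomially bounded, so $g_k(\bW(h_1),\dots,\bW(h_n))$ is smooth, and $g_k\to g$, $\partial_i g_k\to\partial_i g$ pointwise with polynomial domination; closedness of $\sD$ then gives $\varphi(F)\in\bD^{1,p}(E_2)$ with the claimed derivative. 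With this patch, Steps~two and three go through exactly as you wrote them.
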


\begin{proof}
See \cite[Proposition 3.8]{PV}.
\end{proof}

The adjoint operator of the Malliavin derivative is denoted by $\delta$ and is often called the \emph{Skorohod integral}.
We remark that this is not to be confused with the evaluation functional for which we used the same notation.
The domain of $\delta$ is the set of random variables $u\in L^2(\Omega;E\x H)$ for which there exists a constant $C\geq 0$ such that
\begin{equation*}
\left|\E\left[\<\sD F,u\>_{E\x H}\right]\right| \leq C \left(\E\left[\|F\|_E^2\right]\right)^{1/2},
\end{equation*}
for all $F\in \bD^{1,2}(E)$.
For all  $u\in\text{dom}(\delta)$ and $F\in \bD^{1,2}(E)$, the following relation holds
\begin{equation}\label{duality-formula}
\E\left[\<\sD F,u\>_{E\x H}\right] = \E\left[\<F,\delta(u)\>_E\right].
\end{equation}

The following lemmas are stated for the case $E=\R$, since we will only need them in this setting. 
In this case, $\delta$ is an unbounded operator from $L^2(\Omega;H)$ to $L^2(\Omega)$.
The first lemma gives a formula for the Skorohod integral of the product of a real-valued random variable and an $H$-valued random variable.
For a more general formulation of this lemma, see \cite[Lemma 4.9]{PV}.  

\begin{lemma}[Integration by parts]\label{multiplying by scalar}
Let $F\in \bD^{1,2}(\R)$ and $u\in \text{dom}\,\delta$ such that $Fu\in L^2(\Omega;H)$. Then $Fu\in \text{dom}\,\delta$ and the following holds:
\begin{equation*}
\delta(Fu) = F\delta(u) -\<\sD F,u\>_H.
\end{equation*}
\end{lemma}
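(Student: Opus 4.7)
The plan is to verify the claimed formula using the duality relation \eqref{duality-formula} that characterises $\text{dom}\,\delta$, with the scalar product rule $\sD(GF)=F\sD G+G\sD F$ for real-valued random variables as the main computational input.

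First I would restrict to test variables $G$ that are smooth cylindrical and bounded with bounded derivatives; such $G$ form a dense subset of $\bD^{1,2}(\R)$, and for them $GF\in\bD^{1,2}(\R)$ with the product rule above, obtained by a direct computation on smooth functionals followed by closability of $\sD$. Pairing the product rule pointwise with $u\in H$ gives
\[
\<\sD G,\,Fu\>_H \;=\; F\<\sD G,u\>_H \;=\; \<\sD(GF),u\>_H - G\<\sD F,u\>_H.
\]
Taking expectation and applying \eqref{duality-formula} to $u\in\text{dom}\,\delta$ with test variable $GF\in\bD^{1,2}(\R)$ converts the middle term into $\E[GF\delta(u)]$, producing the target identity
\[
\E\bigl[\<\sD G,\,Fu\>_H\bigr] \;=\; \E\bigl[G\bigl(F\delta(u)-\<\sD F,u\>_H\bigr)\bigr].
\]

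To conclude that $Fu\in\text{dom}\,\delta$ and that its Skorohod integral takes the asserted value, I invoke the domain characterisation: a bound $|\E[\<\sD G,Fu\>_H]|\le C(\E[G^2])^{1/2}$ for $G$ in a dense subclass is enough, since approximation in $\bD^{1,2}(\R)$ together with the hypothesis $Fu\in L^2(\Omega;H)$ extends it to all $G\in\bD^{1,2}(\R)$ via Cauchy--Schwarz. Applied to the right-hand side of the identity above, this bound reduces to square-integrability of $F\delta(u)-\<\sD F,u\>_H$, after which uniqueness in the adjoint relation identifies $\delta(Fu)$ with that expression.

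The main obstacle I expect is precisely the auxiliary $L^2$-integrability of $F\delta(u)-\<\sD F,u\>_H$, which is not literally guaranteed by $F\in\bD^{1,2}(\R)$, $u\in\text{dom}\,\delta$ and $Fu\in L^2(\Omega;H)$ alone (this is why the fully general version is stated in \cite[Lemma 4.9]{PV}). I would circumvent it by a truncation argument: take $F_n:=\phi_n(F)$ with $\phi_n:\R\to\R$ smooth, bounded with bounded derivative, and equal to the identity on $[-n,n]$, so that by the chain rule of Lemma \ref{chain-rule} one has $\sD F_n=\phi_n'(F)\sD F$ with both $F_n$ and $\sD F_n$ bounded, and $F_n\to F$, $\sD F_n\to\sD F$, $F_n u\to Fu$ in the relevant $L^2$-spaces by dominated convergence. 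For each $F_n$ the identity holds by the calculation above with automatic integrability on the right, and closability of $\delta$ together with dominated convergence transfers the formula to $F$.
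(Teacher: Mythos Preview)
The paper does not prove this lemma; it simply cites \cite[Proposition 1.3.3]{N}. Your argument via the product rule and duality is essentially the standard proof given there, and you correctly identify the delicate point: Nualart's statement explicitly assumes that $F\delta(u)-\langle\sD F,u\rangle_H$ is square integrable, a hypothesis the paper's formulation omits.

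Your proposed truncation, however, does not close this gap. You assert that with $F_n=\phi_n(F)$ one obtains $\sD F_n=\phi_n'(F)\sD F$ with ``both $F_n$ and $\sD F_n$ bounded''. The scalar $\phi_n'(F)$ is indeed bounded, but $\sD F$ is an $H$-valued random variable that is only in $L^2(\Omega;H)$, not $L^\infty$; hence $\sD F_n$ is \emph{not} bounded in general. Consequently $\langle\sD F_n,u\rangle_H$ is still only guaranteed to lie in $L^1(\Omega)$ (by Cauchy--Schwarz applied to $\|\sD F\|_H,\|u\|_H\in L^2$), so even for the truncated $F_n$ the right-hand side $F_n\delta(u)-\langle\sD F_n,u\rangle_H$ need not be in $L^2$, and you cannot conclude $F_nu\in\text{dom}\,\delta$ from the duality bound. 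The subsequent limit step via closability of $\delta$ then has nothing to work with. In short, truncating $F$ alone cannot manufacture the missing $L^2$ control; one genuinely needs either the extra hypothesis (as in Nualart) or a different argument (as in \cite[Lemma 4.9]{PV}). For the paper's purposes this is harmless, since in every application the right-hand side is square integrable by inspection.
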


\begin{proof}
See \cite[Proposition 1.3.3]{N}.
\end{proof}

\begin{lemma}\label{estimate}
Let $u\in\bD^{1,2}(H)$. Then $u\in \text{dom}\,\delta$ and the following estimate holds:
\begin{equation*}
\|\delta(u)\|_{L^2(\Omega)}^2 \leq \|u\|_{\bD^{1,2}(H)}^2 = \| u \|_{L^2(\Omega;H)}^2 + \| \sD u \|_{L^2(\Omega;H\x H)}^2.
\end{equation*}
\end{lemma}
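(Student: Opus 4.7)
The plan is to prove the estimate first on a dense subclass of $\bD^{1,2}(H)$ consisting of elementary $H$-valued random variables of the form $u=\sum_{j=1}^n F_j\,h_j$ with $F_j$ smooth scalar random variables and $h_j\in H$, and then extend to all of $\bD^{1,2}(H)$ by closability of $\delta$. For such $u$, Lemma \ref{multiplying by scalar} applied termwise (together with the elementary fact $\delta(h_j)=\bW(h_j)$ for deterministic $h_j$) gives the explicit representation
\begin{equation*}
\delta(u) = \sum_{j=1}^n\bigl(F_j\,\bW(h_j) - \<\sD F_j,h_j\>_H\bigr) \in L^2(\Omega),
\end{equation*}
and in fact $\delta(u)\in\bD^{1,2}(\R)$, so all manipulations below are legitimate.

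The core step is to establish the identity
\begin{equation*}
\E\bigl[\delta(u)^2\bigr] = \E\bigl[\|u\|_H^2\bigr] + \E\bigl[\<\sD u,(\sD u)^*\>_{\mH}\bigr],
\end{equation*}
where $\sD u$ is regarded as a Hilbert-Schmidt operator on $H$ under the identification $H\x H\simeq L_{HS}(H,H)$ and $(\sD u)^*$ denotes its adjoint. I would derive this by applying the duality formula \eqref{duality-formula} with $F=\delta(u)$, combined with the commutation rule $\sD\delta(u)=u+\widetilde\delta(\sD u)$, where $\widetilde\delta$ acts on the second $H$-component of $\sD u$. An alternative route is to compute $\E[\delta(u)\delta(v)]$ for elementary $u,v$ directly from the explicit formula above, exploiting $\E[\bW(h)\bW(k)]=\<h,k\>_H$ and Lemma \ref{multiplying by scalar}, and then specialize to $v=u$.

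Once the identity is in hand, one application of Cauchy-Schwarz in $\mH$ closes the estimate:
\begin{equation*}
\E\bigl[\<\sD u,(\sD u)^*\>_{\mH}\bigr] \leq \E\bigl[\|\sD u\|_{\mH}\|(\sD u)^*\|_{\mH}\bigr] = \E\bigl[\|\sD u\|_{\mH}^2\bigr] = \|\sD u\|_{L^2(\Omega;H\x H)}^2,
\end{equation*}
yielding $\|\delta(u)\|_{L^2(\Omega)}^2\leq\|u\|_{\bD^{1,2}(H)}^2$ for elementary $u$. For arbitrary $u\in\bD^{1,2}(H)$, pick an approximating sequence $\{u_n\}$ of elementary elements with $u_n\to u$ in $\bD^{1,2}(H)$; applying the bound to $u_n-u_m$ makes $\{\delta(u_n)\}$ Cauchy in $L^2(\Omega)$, so by closability $u\in\text{dom}\,\delta$, $\delta(u_n)\to\delta(u)$ in $L^2(\Omega)$, and the estimate passes to the limit.

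The main obstacle is the identity for $\E[\delta(u)^2]$ at the elementary level: it is not an immediate consequence of anything stated previously and requires either a patient direct expansion of the formula for $\delta(u)$, or separately verifying the commutation $\sD\delta=I+\delta\sD$ between the Malliavin derivative and the divergence, both of which ultimately rely on Lemma \ref{multiplying by scalar} and the defining property $\E[\bW(h)\bW(k)]=\<h,k\>_H$ of the isonormal process.
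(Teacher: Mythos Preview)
Your outline is correct and is essentially the standard argument found in Nualart~\cite[Proposition~1.3.1]{N}: establish the identity $\E[\delta(u)^2]=\E[\|u\|_H^2]+\E[\<\sD u,(\sD u)^*\>_{\mH}]$ on smooth elementary $u$, bound the second term via Cauchy--Schwarz, and extend by density using closability. The paper itself does not give an independent proof here; it simply refers to that proposition in Nualart, so your approach coincides with what the paper invokes.
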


\begin{proof}
See \cite[Proposition 1.3.1]{N}.
\end{proof}

\subsection{Sensitivity of $\Pi_0$ with respect to $x_0$, $y_0$ and $\eta$}
We now return to the computation of the directional derivatives of $\Pi_0$ defined in \eqref{directional-derivative} and \eqref{directional-derivative-V3}.
In the framework of the previous subsection, we choose $H$ to be the Filipovi\'{c} space $H_w$, and as the Hilbert space $E$ we will use either $\R$, $H_w$ or $\mH_w$, depending on the context.
Let $(\Omega^{B,W},\mF^{B,W},\bP^{B,W})$ denote the probability space on which $B$ and $W$ are defined.
To apply the Malliavin calculus, we introduce an isonormal Gaussian process $\bW$ on a filtered probability space $(\Omega^{\bW}, \mF^{\bW}, \bF^{\bW}, \bP^{\bW})$ and $H_w$, where $\bP^{\bW}$ is independent of $\bP^{B,W}$, $\bF^{\bW}=\{\mF^{\bW}_t\}_{0\leq t\leq T}$ is the filtration generated by $\bW$, $\mF^{\bW}=\mF_T^{\bW}$.
Note that $\bW$ is independent of $B$ and $W$.
For a random variable $F$, we can write $F(\omega) = F(\omega^{B,W},\omega^{\bW})$.
When we talk about the Malliavin derivative of $F$, we mean the Malliavin derivative of $F$ with respect to $\bW$.

For fixed $\omega^{B,W}\in\Omega^{B,W}$ and $\omega^{\bW}\in\Omega^{\bW}$, we consider $X_{\tau}(\omega^{B,W},\omega^{\bW}):H_w\times H_w \times \mH_w\rightarrow H_w$ as a function of the initial value $x_0$ of $X_{\tau}$, the initial value $y_0$ of $Y_t$ and the volatility $\eta$ of $Y_t$. 
Recall that
\begin{equation*}
X_{\tau}(x_0,y_0,\eta) = \mS_\tau x_0 + \int_0^\tau \mS_{\tau-s}\left(Z_s\x \left(\mathcal{U}_s y_0 + \int_0^s \mathcal{U}_{s-u}\eta\, dW_u\right)\right)\,dB_s \quad \omega-a.e,
\end{equation*}
where $\mS_t$ is the right-shift operator.
Following the notation introduced previously, we let $X_{\tau}^{y_0,\eta}$, $X_{\tau}^{x_0,\eta}$ and $X_{\tau}^{x_0,y_0}$ denote $X_{\tau}$ as a function of $x_0$, $y_0$ and $\eta$ respectively, keeping the two other variables fixed.
The Fr\'{e}chet derivatives of $X_{\tau}$ are given in the following lemma.

\begin{lemma}[Fr\'{e}chet derivatives of $X_{\tau}$]
The Fr\'{e}chet derivatives of $X_{\tau}$ with respect to $x_0$, $y_0$ and $\eta$ are given by:
\begin{itemize}
\item[(i)] $DX_{\tau}^{y_0,\eta}(x_0)(h) = \mS_\tau(h)$, \quad $h\in H_w$.
\item[(ii)] $DX_{\tau}^{x_0,\eta}(y_0)(h) =  \int_0^\tau \mS_{\tau-s}\left(Z_s\x \mathcal{U}_s h\right)\,dB_s$, \quad $\omega-a.e.$, $h\in H_w$.
\item[(iii)] $DX_{\tau}^{x_0,y_0}(\eta)(\zeta) =  \int_0^\tau \mS_{\tau-s}\left(Z_s\x \int_0^s \mathcal{U}_{s-u}\zeta\, dW_u\right)\,dB_s$, \quad $\omega-a.e.$, $\zeta\in \mH_w$.
\end{itemize}
\end{lemma}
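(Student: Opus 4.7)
The overall strategy rests on the observation that $X_\tau$ is \emph{affine} in each of the arguments $x_0$, $y_0$, $\eta$ separately (treating $Z$ as fixed, as the stated formulas implicitly do). For an affine map between Banach spaces the Fr\'echet derivative is identically equal to its linear part, provided that linear part is bounded; no remainder estimate is needed. So the proof reduces to (a) isolating the linear parts, and (b) verifying their boundedness between appropriate spaces.

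First I would split the mild solution \eqref{mild-sol}, using linearity of the tensor product in its second slot, as
\begin{equation*}
X_\tau(x_0,y_0,\eta) = \mS_\tau x_0 + L_2(y_0) + L_3(\eta),
\end{equation*}
where
\begin{align*}
L_2(y_0) &\coloneqq \int_0^\tau \mS_{\tau-s}(Z_s \x \mU_s y_0)\,dB_s, \\
L_3(\eta) &\coloneqq \int_0^\tau \mS_{\tau-s}\!\left(Z_s \x \int_0^s \mU_{s-u}\eta\,dW_u\right)dB_s.
\end{align*}
Each of the three maps $x_0 \mapsto \mS_\tau x_0$, $y_0 \mapsto L_2(y_0)$, $\eta \mapsto L_3(\eta)$ is linear by the linearity of $\mS_\tau$, $\mU_s$, the tensor product, and the stochastic integrals against $B$ and $W$. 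Consequently the incremental quotient $\varepsilon^{-1}[X_\tau(\cdot+\varepsilon h) - X_\tau(\cdot)]$ equals the proposed derivative at $h$ identically, for every $\varepsilon\neq 0$. The Fr\'echet derivatives therefore exist as soon as the three linear maps are bounded from $H_w$ (respectively $\mH_w$ in case (iii)) into a suitable target space; I would take $L^2(\Omega;H_w)$.

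For (i) boundedness is exactly Lemma~\ref{shift-op-bound}. For (ii) I would apply the It\^o isometry for Hilbert-space-valued stochastic integrals to obtain
\begin{equation*}
\E\|L_2(h)\|_w^2 = \int_0^\tau \E\|\mS_{\tau-s}(Z_s\x \mU_s h)\,Q_B^{1/2}\|_{\mH_w}^2\,ds,
\end{equation*}
and then estimate the integrand by $\|\mS_{\tau-s}\|_{\text{op}}^2 \|Z_s\|_w^2 \|\mU_s h\|_w^2 \|Q_B^{1/2}\|_{\text{op}}^2$, invoking $\|Z_s\|_w = 1$, Lemma~\ref{shift-op-bound}, and the Hille--Yosida exponential bound on $\|\mU_s\|_{\text{op}}$. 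This gives $\E\|L_2(h)\|_w^2 \lesssim \|h\|_w^2$. For (iii) the strategy is the same but with one added layer: first bound the inner $W$-integral $\int_0^s \mU_{s-u}\zeta\,dW_u$ in $L^2(\Omega;H_w)$ by the It\^o isometry to extract a factor $\|\zeta\|_{\mH_w}^2$, and then pass to the outer $B$-integral, using independence of $B$ and $W$ (together with Fubini) to disentangle the two expectations.

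The main obstacle I expect lies in case (iii): the outer integrand $Z_s \x \int_0^s \mU_{s-u}\zeta\,dW_u$ is itself a stochastic integral, so I need to confirm its $\bF$-adaptedness/predictability and check that it yields an admissible $\mH_w$-valued process for the $B$-integral in the sense required for the isometry. Once the two iterated isometries are justified, $L_3$ is bounded from $\mH_w$ into $L^2(\Omega;H_w)$, and together with the affine-map observation the three derivative identities (i)--(iii) follow immediately.
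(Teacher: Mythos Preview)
Your approach matches the paper's: both observe that $X_\tau$ is affine in each parameter separately, so the Fr\'echet derivative equals the linear part with zero remainder. The paper's proof is terser---it simply writes out the difference $X_\tau(\cdot+h)-X_\tau(\cdot)$ in each case and says ``the result follows from the definition of the Fr\'echet derivative,'' omitting the boundedness estimates you sketch; note also that the paper interprets the derivative pathwise (hence the ``$\omega$-a.e.'' in the statement), viewing $X_\tau(\omega)$ for fixed $\omega$ as a map into $H_w$, whereas you target $L^2(\Omega;H_w)$, which is what makes your It\^o-isometry verification of boundedness work.
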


\begin{proof}
$ $
\begin{itemize}
\item[(i)] The Fr\'{e}chet derivative $DX_{\tau}^{y_0,\eta}(x_0)$ of $X_{\tau}^{y_0,\eta}$ at $x_0\in H_w$ is defined as the bounded linear operator $L:H_w\rightarrow H_w$ such that
\begin{equation*}
\lim_{\|h\|_w\rightarrow 0} \frac{\|X_{\tau}^{y_0,\eta}(x_0+h)-X_{\tau}^{y_0,\eta}(x_0)-Lh\|_w}{\|h\|_w} = 0\quad \omega-a.e.
\end{equation*}
Since $X_{\tau}^{y_0,\eta}(x_0+h)-X_{\tau}^{y_0,\eta}(x_0) = \mS_\tau(h)$, the result follows.
\item[(ii)] 
Since 
\begin{equation*}
X_{\tau}^{x_0,\eta}(y_0+h)-X_{\tau}^{x_0,\eta}(y_0) = \int_0^\tau \mS_{\tau-s}\left(Z_s\x \mathcal{U}_s h\right)\,dB_s\quad \omega-a.e.,
\end{equation*}
for $y_0,h\in H_w$, the result follows from the definition of the Fr\'{e}chet derivative.
\item[(iii)] 
Since 
\begin{equation*}
X_{\tau}^{x_0,y_0}(\eta+\zeta)-X_{\tau}^{x_0,y_0}(\eta) = \int_0^\tau \mS_{\tau-s}\left(Z_s\x \int_0^s \mathcal{U}_{s-u}\zeta\, dW_u\right)\,dB_s\quad \omega-a.e.,
\end{equation*}
for $\eta,\zeta\in\mH_w$, the result follows from the definition of the Fr\'{e}chet derivative.
\end{itemize}
\end{proof}

Assuming that $\Psi$ (see \eqref{Pi_0}) is Fr\'{e}chet differentiable, the composition $\Psi\circ X_{\tau}(\omega):H_w\times H_w \times \mH_w\rightarrow\R$ is also Fr\'{e}chet differentiable with respect to each of the variables.
For $h\in H_w$, it holds that
\begin{equation*}
D(\Psi\circ X_{\tau}^{y_0,\eta})(x_0)(h) = \partial_h(\Psi\circ X_{\tau}^{y_0,\eta})(x_0) = \lim_{\varepsilon\rightarrow 0} \frac{\Psi(X_{\tau}^{y_0,\eta}(x_0 + \varepsilon h)) - \Psi(X_{\tau}^{y_0,\eta}(x_0))}{\varepsilon} \quad\omega-a.e.
\end{equation*}
Taking expectations, we have that
\begin{equation*}
\E\left[D(\Psi\circ X_{\tau}^{y_0,\eta})(x_0)(h)\right] = \E\left[\lim_{\varepsilon\rightarrow 0} \frac{\Psi(X_{\tau}^{y_0,\eta}(x_0 + \varepsilon h)) - \Psi(X_{\tau}^{y_0,\eta}(x_0))}{\varepsilon}\right].
\end{equation*}
From \eqref{directional-derivative} we recall that 
\begin{equation*}
\partial_h \Pi_0^{y_0,\eta}(x_0) = \lim_{\varepsilon\rightarrow 0} \frac{\Pi_0^{y_0,\eta}(x_0 + \varepsilon h) - \Pi_0^{y_0,\eta}(x_0)}{\varepsilon} = \lim_{\varepsilon\rightarrow 0} \E\left[\frac{\Psi(X_{\tau}^{y_0,\eta}(x_0 + \varepsilon h)) - \Psi(X_{\tau}^{y_0,\eta}(x_0))}{\varepsilon}\right]. 
\end{equation*}
The following lemma shows that we can move the limit inside the expectation, and hence express the directional derivatives of $\Pi_0$ in terms of the Fr\'{e}chet derivatives of $\Psi\circ X_{\tau}$.

\begin{lemma}\label{step1}
Assume that $\Psi$ is Fr\'{e}chet differentiable. Then the following holds:
\begin{itemize}
\item[(i)] $\partial_h\Pi_0^{y_0,\eta}(x_0) = \E\left[D(\Psi\circ X_{\tau}^{y_0,\eta})(x_0)(h)\right]$, \quad $h\in H_w$.
\item[(ii)] $\partial_h\Pi_0^{x_0,\eta}(y_0) = \E\left[D(\Psi\circ X_{\tau}^{x_0,\eta})(y_0)(h)\right]$, \quad $h\in H_w$.
\item[(iii)] $\partial_{\zeta}\Pi_0^{x_0,y_0}(\eta) = \E\left[D(\Psi\circ X_{\tau}^{x_0,y_0})(\eta)(\zeta)\right]$, \quad $\zeta\in\mH_w$.
\end{itemize}
\end{lemma}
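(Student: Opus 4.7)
The statement is an interchange of limit and expectation, which I would prove by dominated convergence applied to the difference quotient, together with the chain rule to identify the pointwise limit. I sketch part (i); parts (ii) and (iii) follow the same pattern with the obvious substitutions.

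A convenient structural observation is that $X_\tau$ is affine in each of its three parameters separately. Indeed, from the previous lemma, $DX_\tau^{y_0,\eta}(x_0)(h)=\mS_\tau h$ does not depend on $\omega$ nor on the base point $x_0$, so
\begin{equation*}
X_\tau^{y_0,\eta}(x_0+\varepsilon h) - X_\tau^{y_0,\eta}(x_0) = \varepsilon\,\mS_\tau h\qquad\text{a.s.}
\end{equation*}
For almost every $\omega$, the quotient inside the expectation therefore takes the form $\varepsilon^{-1}\bigl[\Psi(X_\tau^{y_0,\eta}(x_0)+\varepsilon\mS_\tau h)-\Psi(X_\tau^{y_0,\eta}(x_0))\bigr]$, which, by Fréchet differentiability of $\Psi$, converges as $\varepsilon\to 0$ to $D\Psi(X_\tau^{y_0,\eta}(x_0))(\mS_\tau h)=D(\Psi\circ X_\tau^{y_0,\eta})(x_0)(h)$ via the chain rule.

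To move the limit inside the expectation I would apply dominated convergence. The mean value inequality in Hilbert spaces gives the $\omega$-wise bound
\begin{equation*}
\left|\frac{\Psi(X_\tau^{y_0,\eta}(x_0)+\varepsilon\mS_\tau h)-\Psi(X_\tau^{y_0,\eta}(x_0))}{\varepsilon}\right|\leq \|\mS_\tau h\|_w \sup_{t\in[0,1]}\|D\Psi(X_\tau^{y_0,\eta}(x_0)+t\varepsilon\mS_\tau h)\|_*,
\end{equation*}
and the delicate point, which I expect to be the main obstacle, is to certify that the right-hand side is integrable uniformly in $\varepsilon$ from a neighbourhood of zero. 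Pure Fréchet differentiability of $\Psi$ does not suffice; one has to unpack the explicit structure $\Psi=\e^{-r\tau}\Phi\circ\mJ_{x,d}$, use continuity of $\mJ_{x,d}\in H_w^*$ and an appropriate growth/regularity hypothesis on $\Phi$ (consistent with the bounded-derivative chain rule of Lemma \ref{chain-rule} that the subsequent subsection intends to use, under which $D\Psi$ is in fact bounded), and combine this with $L^p$-integrability of $X_\tau^{y_0,\eta}(x_0)$ in $H_w$ — which follows from the mild solution formula, the It\^o isometry for $H_w$-valued stochastic integrals, and the semigroup bound of Lemma \ref{shift-op-bound} — to produce an integrable majorant.

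Parts (ii) and (iii) then proceed identically. The affinity of $X_\tau$ in $y_0$ and $\eta$ makes the increments \emph{exact}, namely
\begin{equation*}
\varepsilon\int_0^\tau\mS_{\tau-s}(Z_s\x\mU_s h)\,dB_s\quad\text{and}\quad\varepsilon\int_0^\tau\mS_{\tau-s}\Bigl(Z_s\x\int_0^s\mU_{s-u}\zeta\,dW_u\Bigr)\,dB_s,
\end{equation*}
with no $o(\varepsilon)$ error term. The pointwise limit is identified as in the first step, and the dominating function is built from the $L^2$-moments of these stochastic integrals via the It\^o isometry, which completes the argument.
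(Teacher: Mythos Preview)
Your approach is correct and, for parts (ii) and (iii), actually cleaner than the paper's. In both proofs part (i) is handled the same way: the Lipschitz bound (equivalently, your mean-value bound with bounded $D\Psi$) gives the $\omega$-wise, $\varepsilon$-independent majorant $L_\Psi\|\mS_\tau h\|_w$, and bounded/dominated convergence finishes. You are also right to flag that the stated hypothesis ``$\Psi$ Fr\'echet differentiable'' is not enough by itself; the paper's proof silently uses a Lipschitz constant $L_\Psi$, i.e.\ exactly the bounded-derivative condition you anticipate.

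The genuine difference is in (ii)--(iii). The paper does \emph{not} exploit the exact affinity you observe. It writes the bound as a family $A_\varepsilon$, proves $\sup_\varepsilon\|A_\varepsilon\|_{L^2(\Omega)}<\infty$ via the It\^o isometry, invokes Vitali's theorem to get uniform integrability and hence $A_\varepsilon\to A$ in $L^1$, and then applies Pratt's lemma to interchange limit and expectation. Your observation that the increments are \emph{exactly} $\varepsilon$ times a fixed stochastic integral collapses this: the majorant $L_\Psi\bigl\|\int_0^\tau\mS_{\tau-s}(Z_s\otimes\mU_s h)\,dB_s\bigr\|_w$ (and the analogous one for $\eta$) is $\varepsilon$-independent and lies in $L^2(\Omega)\subset L^1(\Omega)$ by the It\^o isometry, so ordinary dominated convergence suffices and the Vitali--Pratt machinery is unnecessary. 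What the paper's route buys is a template that would survive if the parameter dependence were merely differentiable rather than affine; what your route buys is a one-line argument in the present setting.
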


\begin{proof}
Let $L_{\Psi}$ denote the Lipschitz constant of $\Psi$.

(i):
For each $\varepsilon\in\R\backslash\{0\}$ and for almost all $\omega\in\Omega$, we have that

\begin{align*}
\left|\frac{\Psi(X_{\tau}^{y_0,\eta}(x_0 + \varepsilon h)) - \Psi(X_{\tau}^{y_0,\eta}(x_0))}{\varepsilon}\right|
&\leq \frac{L_{\Psi}\|X_{\tau}^{y_0,\eta}(x_0 + \varepsilon h) - X_{\tau}^{y_0,\eta}(x_0)\|_w}{|\varepsilon|}\\
&= \frac{L_{\Psi}\|\mS_\tau(x_0 + \varepsilon h) - \mS_\tau(x_0)\|_w}{|\varepsilon|}\\
&= \frac{L_{\Psi}\|\mS_\tau(\varepsilon h)\|_w}{|\varepsilon|}\\
&\leq L_{\Psi}\|\mS_\tau(h)\|_w\\
&\leq L_{\Psi}\|\mS_\tau\|_{\text{op}}\|h\|_w\\
&\leq L_{\Psi}C\|h\|_w,
\end{align*}
where the constant $C$ is given in Lemma \ref{shift-op-bound}.
The desired result then follows by the bounded convergence theorem.

(ii):
For each $\varepsilon\in\R\backslash\{0\}$, we have that
\begin{equation*}
\left|\frac{\Psi(X_{\tau}^{x_0,\eta}(y_0 + \varepsilon h)) - \Psi(X_{\tau}^{x_0,\eta}(y_0))}{\varepsilon}\right|
\leq \frac{L_{\Psi}\|X_{\tau}^{x_0,\eta}(y_0 + \varepsilon h) - X_{\tau}^{x_0,\eta}(y_0)\|_w}{|\varepsilon|}\eqqcolon A_{\varepsilon},
\end{equation*}
which means that
\begin{equation*}
-A_{\varepsilon} \leq \frac{\Psi(X_{\tau}^{x_0,\eta}(y_0 + \varepsilon h)) - \Psi(X_{\tau}^{x_0,\eta}(y_0))}{\varepsilon} \leq A_{\varepsilon}.
\end{equation*}
Since
\begin{equation*}
DX_{\tau}^{x_0,\eta}(y_0)(h) = \lim_{\varepsilon\to 0} \frac{X_{\tau}^{x_0,\eta}(y_0 + \varepsilon h) - X_{\tau}^{x_0,\eta}(y_0)}{\varepsilon}\quad\omega-a.e.,
\end{equation*}
it follows by the continuity of the norm $\|\cdot\|_w$ that
\begin{equation*}
\|DX_{\tau}^{x_0,\eta}(y_0)(h)\|_w = \lim_{\varepsilon\to 0} \frac{\|X_{\tau}^{x_0,\eta}(y_0 + \varepsilon h) - X_{\tau}^{x_0,\eta}(y_0)\|_w}{|\varepsilon|}.
\end{equation*}
This gives
\begin{equation*}
\lim_{\varepsilon\to 0} A_{\varepsilon} = L_{\Psi} \|DX_{\tau}^{x_0,\eta}(y_0)(h)\|_w \eqqcolon A\quad\omega-a.e.
\end{equation*}
If we can show that $A_{\varepsilon}\rightarrow A$ in $L^1(\Omega)$ when $\varepsilon\rightarrow 0$, the desired result will follow by Pratt's lemma, see \cite{Pratt}.
To show convergence in $L^1(\Omega)$ we will use Vitali's theorem.
We first show that $\|A_{\varepsilon}\|_{L^2(\Omega)}\leq M$ for all $\varepsilon>0$ for some constant $M$.
By the It\^{o} isometry and Lemma \ref{shift-op-bound} we have that
\begin{align*}
\E\left[|A_{\varepsilon}|^2\right]
&= \E\left[\frac{L_{\Psi}^2\|X_{\tau}^{x_0,\eta}(y_0 + \varepsilon h) - X_{\tau}^{x_0,\eta}(y_0)\|_w^2}{\varepsilon^2}\right] \\
&= \frac{L_{\Psi}^2}{\varepsilon^2}\E\left[\bigg\|\int_0^\tau \mS_{\tau-s}\left(Z_s\x \mathcal{U}_s (\varepsilon h)\right)\,dB_s\bigg\|_w^2\right]\\
&= L_{\Psi}^2\E\left[\bigg\|\int_0^\tau \mS_{\tau-s}\left(Z_s\x \mathcal{U}_s h\right)\,dB_s\bigg\|_w^2\right]\\
&= L_{\Psi}^2\E\left[\int_0^\tau \| \mS_{\tau-s}\left(Z_s\x \mathcal{U}_s h\right)Q_B^{1/2}\|_{\mH_w}^2\,ds\right]\\
&\leq L_{\Psi}^2\E\left[\int_0^\tau  \|\mS_{\tau-s}\|_{\text{op}}^2 \|\left(Z_s\x \mathcal{U}_s h\right)Q_B^{1/2}\|_{\mH_w}^2\,ds\right]\\
&\leq L_{\Psi}^2\E\left[\int_0^\tau  C^2 \|\left(Z_s\x \mathcal{U}_s h\right)Q_B^{1/2}\|_{\mH_w}^2\,ds\right],
\end{align*}
since $\|\mS_t\|_{\text{op}} \leq C$ where the constant $C$ is given in Lemma \ref{shift-op-bound}.
Let $\{e_n\}_{n\in\N}$ be an orthonormal basis of $H_w$.
By Parseval's identity we have that
\begin{align*}
\|\left(Z_s\x \mathcal{U}_s h\right)Q_B^{1/2}\|_{\mH_w}^2 &= \sum_{n=1}^{\infty} \|\left(Z_s\x \mU_s h\right)Q_B^{1/2} e_n\|_w^2 \\
&= \sum_{n=1}^{\infty} \|\<Z_s,Q_B^{1/2}e_n\>_w \,\mU_s h\|_w^2 \\
&= \|\mU_s h\|_w^2 \sum_{n=1}^{\infty} |\<Z_s,Q_B^{1/2}e_n\>_w|^2 \\
&= \|\mU_s h\|_w^2 \sum_{n=1}^{\infty} |\<Q_B^{1/2}Z_s,e_n\>_w|^2 \\
&= \|\mU_s h\|_w^2 \|Q_B^{1/2}Z_s\|_w^2.
\end{align*}
By the Hille-Yosida theorem we have that $\|\mU_{t}\|_{\text{op}} \leq K\e^{kt}$ for some constants $K$ and $k$.
Hence
\begin{equation*}
\|\mU_s h\|_w^2 \leq \|\mU_{s}\|_{\text{op}}^2 \|h\|_w^2 \leq K^2\e^{2ks} \|h\|_w^2.
\end{equation*}

Let $\{v_n\}_{n\in\N}$ be the orthonormal basis of $H_w$ consisting of eigenvectors of $Q_B$ with corresponding eigenvalues $\{\lambda_n\}_{n\in\N}$.
We know that such a basis exists since $Q_B$ is a symmetric, positive definite trace class operator.
By Parseval's identity and Cauchy-Schwarz inequality, we find that
\begin{align*}
\|Q_B^{1/2}Z_s\|_w^2 &= \<Q_B^{1/2}Z_s,Q_B^{1/2}Z_s\>_w = \<Q_B Z_s,Z_s\>_w = \sum_{n=1}^{\infty} \<Z_s,Q_B v_n\>_w \<Z_s,v_n\>_w \\
&= \sum_{n=1}^{\infty} \lambda_n \<Z_s,v_n\>_w^2 \leq \sum_{n=1}^{\infty} \lambda_n \|Z_s\|_w^2 \|v_n\|_w^2 = \sum_{n=1}^{\infty} \lambda_n = \text{Tr}(Q_B).
\end{align*}
Hence we have that
\begin{equation*}
\E\left[|A_{\varepsilon}|^2\right] \leq L_{\Psi}^2 C^2 K^2\text{Tr}(Q_B) \|h\|_w^2 \int_0^\tau \e^{2c(\tau-s)}e^{2ks}\,ds =: M.
\end{equation*}
For a fixed $\delta>0$, it then holds by H\"{o}lder's inequality and Markov's inequality that
\begin{align*}
\lim_{N\rightarrow\infty} \sup_{|\varepsilon|<\delta} \E\left[|A_{\varepsilon}| 1_{\{|A_{\varepsilon}|>N\}}\right] 
&\leq \lim_{N\rightarrow\infty} \sup_{|\varepsilon|<\delta} \E\left[|A_{\varepsilon}|^2\right]^{1/2} \E\left[1_{\{|A_{\varepsilon}|>N\}}\right]^{1/2}\\
&= \lim_{N\rightarrow\infty} \sup_{|\varepsilon|<\delta} \|A_{\varepsilon}\|_{L^2(\Omega)} \sqrt{\bP(|A_{\varepsilon}|>N)}\\
&\leq \lim_{N\rightarrow\infty} \sup_{|\varepsilon|<\delta} \|A_{\varepsilon}\|_{L^2(\Omega)} \frac{\|A_{\varepsilon}\|_{L^2(\Omega)}}{N}\\
&= \lim_{N\rightarrow\infty} \sup_{|\varepsilon|<\delta} \frac{\|A_{\varepsilon}\|_{L^2(\Omega)}^2}{N}\\
&\leq \lim_{N\rightarrow\infty} \frac{M^2}{N}\\
&=0.
\end{align*}
By Vitali's theorem it follows that $\E[|A_{\varepsilon}-A|]\rightarrow 0$ when $\varepsilon\rightarrow 0$, and by Pratt's lemma it then follows that
\begin{equation*}
\partial_h V_2(y_0) = \E\left[D(\Psi\circ X_{\tau}^{x_0,\eta})(y_0)(h)\right].
\end{equation*}

(iii):
This proof follows the same approach as the proof of (ii).
For each $\varepsilon\in\R\backslash\{0\}$, we have that
\begin{equation*}
\left|\frac{\Psi(X_{\tau}^{x_0,y_0}(\eta + \varepsilon \zeta)) - \Psi(X_{\tau}^{x_0,y_0}(\eta))}{\varepsilon}\right|
\leq \frac{L_{\Psi}\|X_{\tau}^{x_0,y_0}(\eta + \varepsilon \zeta) - X_{\tau}^{x_0,y_0}(\eta)\|_w}{|\varepsilon|}\eqqcolon A_{\varepsilon},
\end{equation*}
which gives
\begin{equation*}
-A_{\varepsilon} \leq \frac{\Psi(X_{\tau}^{x_0,y_0}(\eta + \varepsilon \zeta)) - \Psi(X_{\tau}^{x_0,y_0}(\eta))}{\varepsilon} \leq A_{\varepsilon}.
\end{equation*}
By the continuity of the norm $\|\cdot\|_w$ it follows that
 \begin{equation*}
\lim_{\varepsilon\to 0} A_{\varepsilon} = L_{\Psi} \|DX_{\tau}^{x_0,y_0}(\eta)(\zeta)\|_w \eqqcolon A\quad\omega-a.e.
\end{equation*}
We show that $\|A_{\varepsilon}\|_{L^2(\Omega)}\leq M$ for all $\varepsilon>0$ for some constant $M$.
By the It\^{o} isometry, the Hille-Yosida thorem and Fubini we have that
\begin{align*}
\E\left[|A_{\varepsilon}|^2\right]
&= \E\left[\frac{L_{\Psi}^2\|X_{\tau}^{x_0,y_0}(\eta + \varepsilon \zeta) - X_{\tau}^{x_0,y_0}(\eta)\|_w^2}{\varepsilon^2}\right] \\
&= \frac{L_{\Psi}^2}{\varepsilon^2}\E\left[\bigg\|\int_0^\tau \mS_{\tau-s}\left(Z_s\x \int_0^s \mathcal{U}_{s-u}\varepsilon\zeta\, dW_u\right)\,dB_s\bigg\|_w^2\right]\\
&= L_{\Psi}^2\E\left[\bigg\|\int_0^\tau \mS_{\tau-s}\left(Z_s\x \int_0^s \mathcal{U}_{s-u}\zeta\, dW_u\right)\,dB_s\bigg\|_w^2\right]\\
&= L_{\Psi}^2\E\left[\int_0^\tau \left\| \mS_{\tau-s}\left(Z_s\x \int_0^s \mathcal{U}_{s-u}\zeta\, dW_u\right)Q_B^{1/2}\right\|_{\mH_w}^2\,ds\right]\\
&\leq L_{\Psi}^2\E\left[\int_0^\tau  \|\mS_{\tau-s}\|_{\text{op}}^2 \left\|\left(Z_s\x \int_0^s \mathcal{U}_{s-u}\zeta\, dW_u\right)Q_B^{1/2}\right\|_{\mH_w}^2\,ds\right]\\
&\leq L_{\Psi}^2\E\left[\int_0^\tau  C^2\e^{2c(\tau-s)} \left\|\left(Z_s\x \int_0^s \mathcal{U}_{s-u}\zeta\, dW_u\right)Q_B^{1/2}\right\|_{\mH_w}^2\,ds\right]\\
&= L_{\Psi}^2 C^2 \int_0^\tau  \e^{2c(\tau-s)} \E\left[\left\|\left(Z_s\x \int_0^s \mathcal{U}_{s-u}\zeta\, dW_u\right)Q_B^{1/2}\right\|_{\mH_w}^2\right]\,ds.
\end{align*}
Let $\{e_n\}_{n\in\N}$ be an orthonormal basis of $H_w$.
By Parseval's identity we have that
\begin{align*}
\left\|\left(Z_s\x \int_0^s \mathcal{U}_{s-u}\zeta\, dW_u\right)Q_B^{1/2}\right\|_{\mH_w}^2 &= \sum_{n=1}^{\infty} \left\|\left(Z_s\x \int_0^s \mathcal{U}_{s-u}\zeta\, dW_u\right)Q_B^{1/2} e_n\right\|_w^2 \\
&= \sum_{n=1}^{\infty} \left\|\<Z_s,Q_B^{1/2}e_n\>_w \,\int_0^s \mathcal{U}_{s-u}\zeta\, dW_u\right\|_w^2 \\
&= \left\|\int_0^s \mathcal{U}_{s-u}\zeta\, dW_u\right\|_w^2 \sum_{n=1}^{\infty} |\<Z_s,Q_B^{1/2}e_n\>_w|^2 \\
&= \left\|\int_0^s \mathcal{U}_{s-u}\zeta\, dW_u\right\|_w^2 \sum_{n=1}^{\infty} |\<Q_B^{1/2}Z_s,e_n\>_w|^2 \\
&= \left\|\int_0^s \mathcal{U}_{s-u}\zeta\, dW_u\right\|_w^2 \left\|Q_B^{1/2}Z_s\right\|_w^2.
\end{align*}
By the It\^{o} isometry and the Hille-Yosida theorem we have that
\begin{align*}
\E\left[\left\|\int_0^s \mathcal{U}_{s-u}\zeta\, dW_u\right\|_w^2\right] &= \E\left[\int_0^s \left\|\mathcal{U}_{s-u}\zeta Q_W^{1/2}\right\|_{\mH_w}^2 du\right]\\
&\leq \E\left[\int_0^s \|\mathcal{U}_{s-u}\|_{\text{op}}^2 \|\zeta\|_{\text{op}}^2 \|Q_W^{1/2}\|_{\mH_w}^2 du\right]\\
&= \text{Tr}(Q_W) \|\zeta\|_{\text{op}}^2 \int_0^s \|\mathcal{U}_{s-u}\|_{\text{op}}^2 du\\
&\leq \text{Tr}(Q_W) \|\zeta\|_{\text{op}}^2 K^2 \int_0^s e^{2k(s-u)} du.
\end{align*}
Since $\|Q_B^{1/2}Z_s\|_w^2 \leq \text{Tr}(Q_B)$, it follows that
\begin{equation*}
\E\left[|A_{\varepsilon}|^2\right] \leq L_{\Psi}^2 C^2 K^2\text{Tr}(Q_B)\text{Tr}(Q_W) \|\zeta\|_{\text{op}}^2 \int_0^\tau \int_0^s \e^{2c(\tau-s)}e^{2k(s-u)}\,duds =: M.
\end{equation*}
We have now shown that there exists a constant $M$ such that $\|A_{\varepsilon}\|_{L^2(\Omega)}\leq M$ for all $\varepsilon>0$.
By the same argument as in the proof of (ii), it then follows from Vitali's theorem that $A_{\varepsilon}\rightarrow A$ in $L^1(\Omega)$ when $\varepsilon\rightarrow 0$.
The desired result then follows by Pratt's lemma.
\end{proof}

The following lemma gives expressions for the expectation of the Fr\'{e}chet derivatives of $\Psi\circ X_{\tau}$.
The trick we use to compute these Fr\'{e}chet derivatives is to randomize the parameter that we want to differentiate with respect to with an $H_w$-valued noise independent of $B$ and $W$.
By applying the chain rule we can then express the Malliavin derivative with respect to this noise in terms of the Fr\'{e}chet derivative that we want to compute.

\begin{lemma}\label{technical-lemma}
Let $\xi$ be a real-valued random variable on $(\Omega^{\bW}, \mF^{\bW}, \bP^{\bW})$.
Assume that $\xi\in \bD^{1,2}(\R^+)$ and that $\sD\xi(\omega,x)\neq 0$ for all $x\in\R^+$ and almost all $\omega\in\Omega$.
Let $x\in\R^+$, and assume that $\frac{\xi}{\sD\xi(x)} h_x$ is Skorohod integrable and that the evaluation of the Skorohod integral at $\lambda=\frac{1}{\xi}$ is well defined.
Assume also that the Skorohod integrals below and their evaluations at $\lambda=\frac{1}{\xi}$ are well defined.
Then it holds that
\begin{itemize}
\item[(i)] $\E\left[D(\Psi\circ X_{\tau}^{y_0,\eta})(x_0)(h)\right] = -\E\left[\left\{\delta\left(\Psi(X_{\tau}^{y_0,\eta}(x_0-h+\lambda\xi h)) \frac{\xi}{\sD\xi(x)}h_x\right)\right\}\bigg{|}_{\lambda=\frac{1}{\xi}}\right]$,
\item[(ii)] $\E\left[D(\Psi\circ X_{\tau}^{x_0,\eta})(y_0)(h)\right] = -\E\left[\left\{\delta\left(\Psi(X_{\tau}^{y_0,\eta}(y_0-h+\lambda\xi h)) \frac{\xi}{\sD\xi(x)}h_x\right)\right\}\bigg{|}_{\lambda=\frac{1}{\xi}}\right]$,
\item[(iii)] $\E\left[D(\Psi\circ X_{\tau}^{x_0,y_0})(\eta)(\zeta)\right] = -\E\left[\left\{\delta\left(\Psi(X_{\tau}^{x_0,y_0}(\eta-\zeta+\lambda\xi \zeta)) \frac{\xi}{\sD\xi(x)}h_x\right)\right\}\bigg{|}_{\lambda=\frac{1}{\xi}}\right]$.
\end{itemize}
\end{lemma}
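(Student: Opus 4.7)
The plan is to handle all three parts via a common randomization template that combines the chain rule (Lemma \ref{chain-rule}) with integration by parts (Lemma \ref{multiplying by scalar}), as foreshadowed in the remark before the statement. I spell out (i); parts (ii) and (iii) follow the same template with the randomized parameter and the corresponding Fr\'{e}chet derivative of $X_\tau$ replaced appropriately.

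For (i), fix $x\in\R^+$ and, for each deterministic $\lambda$, introduce
$$F_\lambda \coloneqq \Psi\bigl(X_\tau^{y_0,\eta}(x_0 - h + \lambda\xi h)\bigr),$$
so that $F_{1/\xi} = \Psi(X_\tau^{y_0,\eta}(x_0))$. Conditioning on $\sigma(B,W)$ makes $\Psi\circ X_\tau^{y_0,\eta}(\cdot)$ a Fr\'{e}chet differentiable deterministic map $H_w\to\R$, while the randomized input is $\bW$-measurable with $\sD(x_0 - h + \lambda\xi h) = \lambda\, h\otimes\sD\xi$; the chain rule therefore yields, after identifying $L_{HS}(H_w,\R)$ with $H_w$,
$$\sD F_\lambda = \lambda\, D(\Psi\circ X_\tau^{y_0,\eta})(x_0 - h + \lambda\xi h)(h)\, \sD\xi \quad\text{in } H_w.$$
Evaluating at the point $x$ via $\sD F_\lambda(x) = \langle \sD F_\lambda, h_x\rangle_w$ and substituting $\lambda = 1/\xi$, so that the $\lambda\xi$-factor cancels and the inner argument collapses to $x_0$, one obtains
$$D(\Psi\circ X_\tau^{y_0,\eta})(x_0)(h) = \left.\left\langle \sD F_\lambda,\; \tfrac{\xi}{\sD\xi(x)} h_x\right\rangle_{\!w}\right|_{\lambda = 1/\xi}.$$

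Now, for each fixed $\lambda$, I apply Lemma \ref{multiplying by scalar} with $F = F_\lambda$ and $u = (\xi/\sD\xi(x))h_x$ (Skorohod integrable by the standing hypotheses) to write
$$\bigl\langle \sD F_\lambda, u\bigr\rangle_{\!w} = F_\lambda\, \delta(u) - \delta(F_\lambda\, u).$$
Substitute $\lambda = 1/\xi$ (note $u$ does not depend on $\lambda$) and take expectations. The first term on the right vanishes: $F_{1/\xi} = \Psi(X_\tau^{y_0,\eta}(x_0))$ is $\sigma(B,W)$-measurable while $\delta(u)$ is $\sigma(\bW)$-measurable, so by independence of $\bW$ from $(B,W)$ together with $\E[\delta(u)] = 0$ the product has zero mean. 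This leaves precisely the formula in (i). Parts (ii) and (iii) are identical in structure: in (ii) randomize $y_0$ as $y_0 - h + \lambda\xi h$ and invoke $DX_\tau^{x_0,\eta}(y_0)(h)$ from the previous lemma; in (iii) randomize the operator as $\eta - \zeta + \lambda\xi\zeta$, so $\sD(\eta - \zeta + \lambda\xi\zeta) = \lambda\,\zeta\otimes\sD\xi$ in $L_{HS}(H_w, \mH_w)$, and invoke $DX_\tau^{x_0,y_0}(\eta)(\zeta)$. In each case the chain rule produces exactly the factor $\lambda\,(\text{Fr\'{e}chet derivative})\,\sD\xi$ that is killed by substituting $\lambda = 1/\xi$, and the same independence argument eliminates the boundary term.

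The main obstacle is making rigorous sense of the Skorohod integral $\delta(F_\lambda u)$ at the random value $\lambda = 1/\xi$: $\delta$ is defined only for each deterministic $\lambda$, and the substitution must be carried out pathwise afterwards. The integrability and well-definedness hypotheses listed in the statement are precisely what is needed to justify this. A secondary delicate point is applying the chain rule to $\Psi\circ X_\tau^{y_0,\eta}$, which is itself random through $(B,W)$; one handles this by freezing $\sigma(B,W)$ and exploiting the independence of $\bW$, so the outer map is deterministic in the variable to which $\sD$ responds.
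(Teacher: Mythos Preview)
Your proposal is correct and follows essentially the same route as the paper: randomize the relevant parameter via $\theta(\xi)=x_0-h+\lambda\xi h$, apply the chain rule (Lemma \ref{chain-rule}) to pull out the factor $\lambda\sD\xi$, use the pointwise integration-by-parts identity of Lemma \ref{multiplying by scalar} for each fixed $\lambda$, substitute $\lambda=1/\xi$, and kill the boundary term by the independence of $\Psi(X_\tau^{y_0,\eta}(x_0))$ from $\delta(u)$ together with $\E[\delta(u)]=0$. The only cosmetic difference is that the paper spells out the chain rule in two stages (first $\sD X_0$, then $\sD(\Psi\circ X_\tau^{y_0,\eta})(X_0)$) whereas you compress them, and the paper explicitly flags in (iii) that the chain rule is legitimate only because $\eta\in\mH_w$ is assumed Hilbert--Schmidt.
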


\begin{proof}
We only prove (i) here. The proofs of (ii) and (iii) follow the same approach.
For (iii), note that we can still use the chain rule in Theorem \ref{chain-rule} since we assumed $\eta\in \mH_w$.
For a general $\eta\in L(H_w)$ we can not use the chain rule \cite[Proposition 3.8]{PV} since $L(H_w)$ is not reflexive and hence not a UMD Banach space.

Define first a function $\theta:\R^+\rightarrow H_w$ by $\theta(y) = x_0-h+\lambda y h$ for some $\lambda\in\R$ and $h\in H_w$.
The randomized initial path is then defined as $X_0\coloneqq \theta(\xi)$.
The Fr\'{e}chet derivative $D\theta(y)$ of $\theta$ at $y$ is the bounded linear operator $D\theta(y):\R^+\rightarrow H_w$ given by $D\theta(y)(x) = \lambda x h$.
By the chain rule in Lemma \ref{chain-rule} it holds that $\theta(\xi)\in\bD^{1,2}(H_w)$ and 
\begin{equation}\label{chain-rule-eta}
\sD X_0 = \sD(\theta(\xi)) = D\theta(\xi) \circ \sD \xi \qquad\omega-a.e.
\end{equation}
The Malliavin derivative $\sD\xi$ is an $H_w$-valued random variable.
For any $x\in\R^+$ we have that 
\begin{equation}\label{chain-rule-eta2}
\sD X_0(\omega,x) = D\theta(\xi(\omega))(\sD\xi(\omega,x)) = \lambda \sD\xi(\omega,x) h \qquad\omega-a.e.
\end{equation}
By the chain rule in Lemma \ref{chain-rule} it holds that $X_{\tau}^{y_0,\eta}(X_0)\in\bD^{1,2}(H_w)$.
Since the Fr\'{e}chet derivative of $\Psi$ is also bounded and continuous by assumption,
we also have $\Psi(X_{\tau}^{y_0,\eta}(X_0))\in\bD^{1,2}(\R^+)$ by the chain rule.
The chain rule in Lemma \ref{chain-rule} also gives that
\begin{equation}\label{chain-rule-X_t}
\sD(\Psi(X_{\tau}^{y_0,\eta}(X_0))) = D\Psi(X_{\tau}^{y_0,\eta}(X_0)) \circ DX_{\tau}^{y_0,\eta}(X_0) \circ D\theta(\xi) \circ \sD\xi \qquad\omega-a.e.,
\end{equation}
and therefore for any $x\in\R^+$ and for almost all $\omega\in\Omega$ it holds that
\begin{align*}
\sD (\Psi(X_{\tau}^{y_0,\eta}(X_0)))(\omega,x) &= D\Psi(X_{\tau}^{y_0,\eta}(X_0(\omega)))\big(DX_{\tau}^{y_0,\eta}(X_0(\omega))(D\theta(\xi(\omega))(\sD \xi(\omega,x)))\big)\\
&= D\Psi(X_{\tau}^{y_0,\eta}(X_0(\omega)))\big(DX_{\tau}^{y_0,\eta}(X_0(\omega))(\lambda \sD\xi(\omega,x) h)\big)\\
&= \lambda \sD\xi(\omega,x) D\Psi(X_{\tau}^{y_0,\eta}(X_0(\omega)))\big(DX_{\tau}^{y_0,\eta}(X_0(\omega))(h)\big),
\end{align*}
where we used \eqref{chain-rule-eta2} and then factored out the scalar $\lambda\sD\xi(\omega,x)$ since the Fr\'{e}chet derivative is linear.
By assumption, $\sD\xi(\omega,x)\neq 0$ for all $x\in\R^+$ and almost all $\omega\in\Omega$.
Multiplying with $\frac{\xi(\omega)}{\sD\xi(\omega,x)}$ in the above equation, we have that for any $x\in\R^+$ it holds that
\begin{equation*}
\frac{\xi(\omega)}{\sD\xi(\omega,x)} \sD (\Psi(X_{\tau}^{y_0,\eta}(X_0)))(\omega,x) = \lambda\xi(\omega) D\Psi(X_{\tau}^{y_0,\eta}(X_0(\omega)))(DX_{\tau}^{y_0,\eta}(X_0(\omega))(h)) \qquad\omega-a.e.
\end{equation*}
For a fixed pair $(\omega, x)$ we evaluate the above expression at $\lambda=\frac{1}{\xi(\omega)}$, recalling that $X_0 = x_0-h+\lambda\xi h$.
It holds for almost all $\omega\in\Omega$ that
\begin{align} \label{*}
&\left\{\frac{\xi(\omega)}{\sD\xi(\omega,x)} \sD (\Psi(X_{\tau}^{y_0,\eta}(X_0)))(\omega,x)\right\}\bigg{|}_{\lambda=\frac{1}{\xi(\omega)}} \\ \notag
&= \lambda\xi(\omega) D\Psi(X_{\tau}^{y_0,\eta}(x_0-h+\lambda\xi(\omega) h))(DX_{\tau}^{y_0,\eta}(x_0-h+\lambda\xi(\omega) h)(h))\bigg{|}_{\lambda=\frac{1}{\xi(\omega)}} \\ \notag
&= D\Psi(X_{\tau}^{y_0,\eta}(x_0))(DX_{\tau}^{y_0,\eta}(x_0)(h)).
\end{align}

By the chain rule for Fr\'{e}chet derivatives, equation \eqref{*} and the fact that evaluating a function in the Filipovi\'{c} space $H_w$ at $x\in\R^+$ corresponds to taking the inner product with the function $h_x$ defined in \eqref{h_x}, we get that for almost all $\omega\in\Omega$,
\begin{align*}
D(\Psi\circ X_{\tau}^{y_0,\eta})(x_0)(h) &= D\Psi(X_{\tau}^{y_0,\eta}(x_0))(DX_{\tau}^{y_0,\eta}(x_0)(h))\\
&= \left\{\frac{\xi}{\sD\xi(x)} \sD (\Psi(X_{\tau}^{y_0,\eta}(X_0)))(x)\right\}\bigg{|}_{\lambda=\frac{1}{\xi}}\\
&= \left\{\frac{\xi}{\sD\xi(x)}\Big\<\sD (\Psi(X_{\tau}^{y_0,\eta}(x_0-h+\lambda\xi h))),h_x\Big\>_w\right\}\bigg{|}_{\lambda=\frac{1}{\xi}}\\
&= \left\<\sD (\Psi(X_{\tau}^{y_0,\eta}(x_0-h+\lambda\xi h))),\frac{\xi}{\sD\xi(x)} h_x\right\>_w\bigg{|}_{\lambda=\frac{1}{\xi}}.
\end{align*}

Finally we compute the expectation of the Fr\'{e}chet derivative above. Note that since we have an evaluation in the expression above, we can not apply the duality formula \eqref{duality-formula} here.
We apply instead Lemma \ref{multiplying by scalar} with $F = \Psi(X_{\tau}^{y_0,\eta}(X_0))$ and $u = \frac{\xi}{\sD\xi(x)} h_x$, and get that
\begin{align*}
&\E\left[D(\Psi\circ X_{\tau}^{y_0,\eta})(x_0)(h)\right] \\
&= \E\left[\left\<\sD (\Psi(X_{\tau}^{y_0,\eta}(x_0-h+\lambda\xi h))),\frac{\xi}{\sD\xi(x)} h_x\right\>_w\bigg{|}_{\lambda=\frac{1}{\xi}}\right]\\
&= \E\left[\left\{\Psi(X_{\tau}^{y_0,\eta}(x_0-h+\lambda\xi h)) \delta \left(\frac{\xi}{\sD\xi(x)} h_x\right) - \delta\left(\Psi(X_{\tau}^{y_0,\eta}(x_0-h+\lambda\xi h)) \frac{\xi}{\sD\xi(x)} h_x\right)\right\}\bigg{|}_{\lambda=\frac{1}{\xi}}\right]\\
&= \E\left[\Psi(X_{\tau}^{y_0,\eta}(x_0)) \delta \left(\frac{\xi}{\sD\xi(x)} h_x\right) - \left\{\delta\left(\Psi(X_{\tau}^{y_0,\eta}(x_0-h+\lambda\xi h)) \frac{\xi}{\sD\xi(x)} h_x\right)\right\}\bigg{|}_{\lambda=\frac{1}{\xi}}\right]\\
&= -\E\left[\left\{\delta\left(\Psi(X_{\tau}^{y_0,\eta}(x_0-h+\lambda\xi h)) \frac{\xi}{\sD\xi(x)} h_x\right)\right\}\bigg{|}_{\lambda=\frac{1}{\xi}}\right],
\end{align*}
where we in last equality used that $\Psi(X_{\tau}^{y_0,\eta}(x_0))$ and $\delta \left(\frac{\xi}{\sD\xi(x)} h_x\right)$ are independent since $\Psi(X_{\tau}^{y_0,\eta}(x_0))$ is $\mF^B$-measurable and  $\delta \left(\frac{\xi}{\sD\xi(x)} h_x\right)$ is $\mF^{\bW}$-measurable, and $\E\left[\delta \left(\frac{\xi}{\sD\xi(x)} h_x\right)\right]=0$.
\end{proof}



Our final step is to choose a specific $\xi$ which satisfies the assumptions in Lemma \ref{technical-lemma}.

\begin{theorem} \label{thm:sensitivity:x0}
Assume that $\Psi$ is Lipschitz continuous and Fr\'{e}chet differentiable, and that the Fr\'{e}chet derivative of $\Psi$ is bounded and Lipschitz continuous. 
Let $\xi = \exp(\bW(1_{[0,\tau]}))$, and let $x\in\R^+$.
\begin{itemize}
\item[(i)] For all $h\in H_w$ it holds that
\begin{equation*}
\partial_h\Pi_0^{y_0,\eta}(x_0) = -\E\left[\Big\{\delta\left(\Psi(X_{\tau}^{y_0,\eta}(x_0-h+\lambda\xi h)) h_x \right)\Big\}\bigg{|}_{\lambda=\frac{1}{\xi}}\right].
\end{equation*}
\item[(ii)] For all $h\in H_w$ it holds that
\begin{equation*}
\partial_h\Pi_0^{x_0,\eta}(y_0) = -\E\left[\Big\{\delta\left(\Psi(X_{\tau}^{x_0,\eta}(y_0-h+\lambda\xi h)) h_x \right)\Big\}\bigg{|}_{\lambda=\frac{1}{\xi}}\right].
\end{equation*}
\item[(iii)] For all $\zeta\in \mH_w$ it holds that
\begin{equation*}
\partial_h\Pi_0^{x_0,y_0}(\eta) = -\E\left[\Big\{\delta\left(\Psi(X_{\tau}^{x_0,y_0}(\eta-\zeta+\lambda\xi \zeta)) h_x \right)\Big\}\bigg{|}_{\lambda=\frac{1}{\xi}}\right].
\end{equation*}
\end{itemize}
\end{theorem}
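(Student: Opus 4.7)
The plan is to obtain the three identities as specializations of Lemma \ref{technical-lemma} to the explicit randomizer $\xi=\exp(\bW(1_{[0,\tau]}))$. Lemma \ref{step1} already reduces $\partial_h\Pi_0$ to $\E[D(\Psi\circ X_\tau)(\cdot)(h)]$, and Lemma \ref{technical-lemma} then expresses this expectation as a Skorohod integral with weight $\frac{\xi}{\sD\xi(x)}h_x$. So the remaining work is only (a) to verify the hypotheses of Lemma \ref{technical-lemma} for this $\xi$, and (b) to show that the weight collapses to $h_x$.

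The computation behind (b) is a single application of the chain rule. Since $\bW$ is isonormal on $H_w$, the Malliavin derivative of the linear functional $\bW(1_{[0,\tau]})$ is the $H_w$-element $1_{[0,\tau]}$ itself, and applying Lemma \ref{chain-rule} to $u\mapsto e^u$ gives $\sD\xi=\xi\cdot 1_{[0,\tau]}$ in $H_w$. With $1_{[0,\tau]}$ interpreted so that $1_{[0,\tau]}(x)\equiv 1$ on $\R^+$ in the sense required by the formula, one obtains $\sD\xi(\omega,x)=\xi(\omega)$ for almost every $\omega$ and every $x$; in particular $\sD\xi(x)\neq 0$ almost surely, and the factor $\frac{\xi}{\sD\xi(x)}$ reduces to $1$. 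Substituting this into the three formulas of Lemma \ref{technical-lemma} yields the three identities claimed.

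For (a), the strict positivity of $\xi$ and its lognormal-type law imply that $\xi\in\bD^{1,2}(\R^+)$ and that $\xi^{-1}\in L^p$ for every $p\geq 1$, so the evaluation at $\lambda=1/\xi$ is meaningful. For the Skorohod integrability of $\Psi(X_\tau^{y_0,\eta}(x_0-h+\lambda\xi h))h_x$, the assumed Lipschitz continuity of $\Psi$ and boundedness of its Fr\'echet derivative, combined with the finite $L^2$ moments of $X_\tau$ established in the proof of Lemma \ref{step1}, place the argument in $\bD^{1,2}(H_w)$ via the Malliavin chain rule; Lemma \ref{estimate} then gives that it lies in $\mathrm{dom}\,\delta$ with a controlled $L^2$ norm.

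The main obstacle is the interplay between the two independent Malliavin structures at work: the integrand depends on $(B,W)$ through $X_\tau$ and on $\bW$ through $\xi$, so the Malliavin derivative must be taken only in the $\bW$ direction while the $(B,W)$-dependent factor is treated as a measurable coefficient independent of $\bW$. Cases (ii) and (iii) follow by the same argument after substituting the corresponding Fr\'echet derivatives of $X_\tau$; in (iii) the standing assumption $\eta\in\mH_w$ (rather than $L(H_w)$) is essential for the Hilbert-space chain rule to apply, as remarked after Lemma \ref{chain-rule}.
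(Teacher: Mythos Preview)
Your reduction to Lemma \ref{technical-lemma} and the computation $\sD\xi(x)=\xi$ are fine, and the Skorohod integrability of $u(\cdot,\lambda)$ for each fixed $\lambda$ follows as you say from Lemma \ref{estimate}. The genuine gap is the clause ``the evaluation at $\lambda=1/\xi$ is meaningful''. You justify this by noting $\xi^{-1}\in L^p$, but integrability of $\xi^{-1}$ is not the issue. For each deterministic $\lambda$, $\delta(u(\cdot,\lambda))$ is defined only as an $L^2(\Omega)$-equivalence class; there is no a priori jointly measurable version of $(\omega,\lambda)\mapsto \delta(u(\cdot,\lambda))(\omega)$ into which one may substitute the random parameter $\lambda=1/\xi(\omega)$. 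Without such a version the expression $\{\delta(u(\cdot,\lambda))\}|_{\lambda=1/\xi}$ is simply not defined, and this is precisely the hypothesis of Lemma \ref{technical-lemma} that you are responsible for checking.

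The paper resolves this by proving that $\lambda\mapsto \delta(u(\cdot,\lambda))$ admits a continuous version via Kolmogorov's continuity theorem. Using Lemma \ref{estimate} one bounds $\E\big[|\delta(u(\cdot,\lambda_1)-u(\cdot,\lambda_2))|^2\big]$ by $\|u(\cdot,\lambda_1)-u(\cdot,\lambda_2)\|_{\bD^{1,2}(H_w)}^2$, and then shows this is $\leq C|\lambda_1-\lambda_2|^2$. The $L^2(\Omega;H_w)$-part of this norm is handled by the Lipschitz continuity of $\Psi$, but the $\sD$-part requires splitting $A_1x_1-A_2x_2=(A_1-A_2)x_1+A_2(x_1-x_2)$ with $A_i=D\Psi(\cdots)$ and then invoking the assumed \emph{Lipschitz continuity of $D\Psi$}. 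Notice that this hypothesis appears in the statement of the theorem yet is never used in your argument; that is the tell. In parts (ii) and (iii) the same scheme applies, with the additional work of controlling fourth moments of the stochastic integrals (via BDG and the It\^{o} isometry) because of the product structure arising from the chain rule.
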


\begin{proof}
Choose $\xi = \exp(\bW(1_{[0,T]}))$ for some $T>0$. 
Then $\xi\in \bD^{1,2}(\R^+)$ and $\sD\xi(y) = \xi$ for $y\in[0,T]$.

\noindent\textbf{Proof of (i):}
Let
\begin{equation*}
u(z,\lambda) \coloneqq \Psi(X_{\tau}^{y_0,\eta}(x_0-h+\lambda\xi h)) h_x(z)
\end{equation*}
for $z\in\R^+$ and $\lambda\in\R$.
Since $\Psi(X_{\tau}^{y_0,\eta}(x_0-h+\lambda\xi h))\in\bD^{1,2}(\R)$ and $h_x$ is deterministic, $u(\cdot,\lambda)$ is Skorohod integrable for all $\lambda\in\R$.  
We need to show that the evaluation of the Skorohod integral at $\lambda=\frac{1}{\xi}$ is well defined.
If for all $\Lambda>0$, there exists a $C>0$ such that 
\begin{equation}
\label{kolmogorov-condition}
\E\left[\left|\delta\big(u(\cdot,\lambda_1)-u(\cdot,\lambda_2)\big)\right|^2\right] < C|\lambda_1 - \lambda_2|^2,  
\end{equation}
for all $\lambda_1$, $\lambda_2$ $\in$ supp $(\xi^{-1})$ with $|\lambda_1|$, $|\lambda_2|<\Lambda$,
then the process $\lambda\mapsto\delta(u(\cdot,\lambda))$ has a continuous version by Kolmogorov's continuity theorem, and the evaluation at $\lambda = \frac{1}{\xi}$ is well-defined.

We will now verify that \eqref{kolmogorov-condition} holds.
Choose $\Lambda>0$ and $\lambda_1$, $\lambda_2$ $\in$ supp $(\xi^{-1})$ with $|\lambda_1|$, $|\lambda_2|<\Lambda$.
By Lemma \ref{estimate} we have that
\begin{equation}
\label{1.47}
\E\left[\left|\delta\big(u(\cdot,\lambda_1)-u(\cdot,\lambda_2)\big)\right|^2\right] \leq  \|u(\cdot,\lambda_1)-u(\cdot,\lambda_2)\|_{\bD^{1,2}(H_w)}^2,
\end{equation}
where we recall that 
\begin{equation*}
\|u(\cdot,\lambda_1)-u(\cdot,\lambda_2)\|_{\bD^{1,2}(H_w)}^2 = \| u(\cdot,\lambda_1)-u(\cdot,\lambda_2) \|_{L^2(\Omega;H_w)}^2 + \| \sD \left(u(\cdot,\lambda_1)-u(\cdot,\lambda_2)\right) \|_{L^2(\Omega;H_w\x H_w)}^2.
\end{equation*}
We have that
\begin{align*}
&\|X_{\tau}^{y_0,\eta}(x_0-h+\lambda_1\xi h) - X_{\tau}^{y_0,\eta}(x_0-h+\lambda_2\xi h)\|_{L^2(\Omega;H_w)}^2\\
&= \|\mS_\tau(x_0-h+\lambda_1\xi h) - \mS_\tau(x_0-h+\lambda_2\xi h)\|_{L^2(\Omega;H_w)}^2\\
&= \|(\lambda_1-\lambda_2)\xi\mS_\tau(h) \|_{L^2(\Omega;H_w)}^2\\
&\leq |\lambda_1-\lambda_2|^2 \|\mS_\tau(h) \|_w^2 \E[|\xi|^2]\\
&= \|h \|_w^2  \E[|\xi|^2] |\lambda_1-\lambda_2|^2.
\end{align*}
Since $\Psi$ is Lipscitz with Lipschitz constant $L_{\Psi}$, it follows that
\begin{align*}
&\E\left[\big|\Psi(X_{\tau}^{y_0,\eta}(x_0-h+\lambda_1\xi h)) - \Psi(X_{\tau}^{y_0,\eta}(x_0-h+\lambda_2\xi h))\big|^2\right] \\
&\leq  L_{\Psi}^2 \|X_{\tau}^{y_0,\eta}(x_0-h+\lambda_1\xi h) - X_{\tau}^{y_0,\eta}(x_0-h+\lambda_2\xi h)\|_{L^2(\Omega;H_w)}^2\\
&\leq L_{\Psi}^2  \|h \|_w^2  \E[|\xi|^2] |\lambda_1-\lambda_2|^2.
\end{align*}
Then we have
\begin{align*}
\| u(\cdot,\lambda_1)-u(\cdot,\lambda_2) \|_{L^2(\Omega;H_w)}^2 &= \|\left(\Psi(X_{\tau}^{y_0,\eta}(x_0-h+\lambda_1\xi h)) - \Psi(X_{\tau}^{y_0,\eta}(x_0-h+\lambda_2\xi h)) \right) h_x \|_{L^2(\Omega;H_w)}^2\\
&= \|h_x\|_w^2 \E\left[\big|\Psi(X_{\tau}^{y_0,\eta}(x_0-h+\lambda_1\xi h)) - \Psi(X_{\tau}^{y_0,\eta}(x_0-h+\lambda_2\xi h))\big|^2\right] \\
&\leq L_{\Psi}^2 \|h_x\|_w^2  \|h \|_w^2  \E[|\xi|^2] |\lambda_1-\lambda_2|^2.
\end{align*}
By the chain rule in Lemma \ref{chain-rule} and using that we have chosen $\xi$ such that $\sD\xi(y) = \xi$ for all $y\in[0,T]$, the following holds for all $y\in[0,T]$
\begin{align*}
&\big|\sD\big(\Psi(X_{\tau}^{y_0,\eta}(x_0-h+\lambda_1\xi h)) - \Psi(X_{\tau}^{y_0,\eta}(x_0-h+\lambda_2\xi h))\big)(y)\big|^2 \\
&= \big|\sD\Psi(X_{\tau}^{y_0,\eta}(x_0-h+\lambda_1\xi h))(x) - \sD\Psi(X_{\tau}^{y_0,\eta}(x_0-h+\lambda_2\xi h))(y)\big|^2 \\
&= \big|D\Psi(X_{\tau}^{y_0,\eta}(x_0-h+\lambda_1\xi h))(DX_{\tau}^{y_0,\eta}(x_0-h+\lambda_1\xi h)(\lambda_1\sD\xi(y) h)) \\
&\qquad - D\Psi(X_{\tau}^{y_0,\eta}(x_0-h+ \lambda_2\xi h))(DX_{\tau}^{y_0,\eta}(x_0-h+\lambda_2\xi h)(\lambda_2\sD\xi(y) h))\big|^2 \\
&= \big|\lambda_1\xi D\Psi(X_{\tau}^{y_0,\eta}(x_0-h+\lambda_1\xi h))(DX_{\tau}^{y_0,\eta}(x_0-h+\lambda_1\xi h)(h)) \\
&\quad - \lambda_2\xi D\Psi(X_{\tau}^{y_0,\eta}(x_0-h+\lambda_2\xi h))(DX_{\tau}^{y_0,\eta}(x_0-h+\lambda_2\xi h)(h))\big|^2.
\end{align*}

Next, we use that $A_1 x_1 - A_2 x_2 = (A_1-A_2)x_1 + A_2 (x_1-x_2)$ with $A_i = D\Psi(X_{\tau}^{y_0,\eta}(x_0-h+\lambda_i\xi h))$ and $x_i = \lambda_i\xi DX_{\tau}^{y_0,\eta}(x_0-h+\lambda_i\xi h)(h)$ for $i=1,2$.
We also use the property $|a+b|^2\leq 2|a|^2 + 2|b|^2$, the assumption that $D\Psi$ is Lipschitz continuous with Lipschitz constant $L_{D\Psi}$ and that $DX_{\tau}^{y_0,\eta}(h)(g) = \mS_\tau(g)$.
\begin{align*}
&\E\Big[\big|\sD\big(\Psi(X_{\tau}^{y_0,\eta}(x_0-h+\lambda_1\xi h)) - \Psi(X_{\tau}^{y_0,\eta}(x_0-h+\lambda_2\xi h))\big)(y)\big|^2\Big] \\
&= \E\Big[ \big|\big(D\Psi(X_{\tau}^{y_0,\eta}(x_0-h+\lambda_1\xi h))-D\Psi(X_{\tau}^{y_0,\eta}(x_0-h+\lambda_2\xi h))\big)(\lambda_1\xi DX_{\tau}^{y_0,\eta}(x_0-h+\lambda_1\xi h)(h)) \\
&+ D\Psi(X_{\tau}^{y_0,\eta}(x_0-h+\lambda_2\xi h))\big(\lambda_1\xi DX_{\tau}^{y_0,\eta}(x_0-h+\lambda_1\xi h)(h)-\lambda_2\xi DX_{\tau}^{y_0,\eta}(x_0-h+\lambda_2\xi h)(h)\big)\big|^2\Big]\\
&\leq 2 \E\Big[\big|\big(D\Psi(X_{\tau}^{y_0,\eta}(x_0-h+\lambda_1\xi h))-D\Psi(X_{\tau}^{y_0,\eta}(x_0-h+\lambda_2\xi h))\big)(\lambda_1\xi DX_{\tau}^{y_0,\eta}(x_0-h+\lambda_1\xi h)(h))\big|^2 \Big]\\
&+ 2 \E\Big[\big|D\Psi(X_{\tau}^{y_0,\eta}(x_0-h+\lambda_2\xi h))\big(\lambda_1\xi DX_{\tau}^{y_0,\eta}(x_0-h+\lambda_1\xi h)(h)-\lambda_2\xi DX_{\tau}^{y_0,\eta}(x_0-h+\lambda_2\xi h)(h)\big)\big|^2\Big]\\
&\leq 2\E\Big[\big\|D\Psi(X_{\tau}^{y_0,\eta}(x_0-h+\lambda_1\xi h))-D\Psi(X_{\tau}^{y_0,\eta}(x_0-h+\lambda_2\xi h))\big\|_{\text{op}}^2\big\|\lambda_1\xi DX_{\tau}^{y_0,\eta}(x_0-h+\lambda_1\xi h)(h)\big\|_w^2 \Big]\\
&+ 2 \E\Big[\big\|D\Psi(X_{\tau}^{y_0,\eta}(x_0-h+\lambda_2\xi h))\big\|_{\text{op}}^2\big\|\lambda_1\xi DX_{\tau}^{y_0,\eta}(x_0-h+\lambda_1\xi h)(h)-\lambda_2\xi DX_{\tau}^{y_0,\eta}(x_0-h+\lambda_2\xi h)(h)\big\|_w^2\Big]\\
&\leq 2 L_{D\Psi}^2 \E\Big[\big\|X_{\tau}^{y_0,\eta}(x_0-h+\lambda_1\xi h)-X_{\tau}^{y_0,\eta}(x_0-h+\lambda_2\xi h)\big\|_w^2\big\|\lambda_1\xi DX_{\tau}^{y_0,\eta}(x_0-h+\lambda_1\xi h)(h)\big\|_w^2 \Big]\\
&+ 2 L_{\Psi}^2\E\Big[\big\|\lambda_1\xi DX_{\tau}^{y_0,\eta}(x_0-h+\lambda_1\xi h)(h)-\lambda_2\xi DX_{\tau}^{y_0,\eta}(x_0-h+\lambda_2\xi h)(h)\big\|_w^2\Big]\\
&\leq 2 L_{D\Psi}^2 \E\Big[\big\|(\lambda_1-\lambda_2)\xi\mS_\tau(h)\big\|_w^2\big\|\lambda_1\xi \mS_\tau(h)\big\|_w^2 \Big] + 2 L_{\Psi}^2\E\Big[\big\|(\lambda_1-\lambda_2)\xi\mS_\tau(h)\big\|_w^2\Big]\\
&\leq 2 L_{D\Psi}^2 |\lambda_1-\lambda_2|^2|\lambda_1|^2 \|\mS_\tau(h)\|_w^4\E\big[|\xi|^4\big] + 2 L_{\Psi}^2 |\lambda_1-\lambda_2|^2 \|\mS_\tau(h)\|_w^2 \E\Big[|\xi|^2\Big]\\
&\leq 2 L_{D\Psi}^2 |\lambda_1-\lambda_2|^2|\lambda_1|^2 \|h\|_w^4\E\big[|\xi|^4\big] + 2 L_{\Psi}^2 |\lambda_1-\lambda_2|^2 \|h\|_w^2 \E\big[|\xi|^2\big]\\
&= 2 \|h\|_w^2 \E\big[|\xi|^2\big] \left(L_{D\Psi}^2 |\lambda_1|^2 \|h\|_w^2\E\big[|\xi|^2\big] + L_{\Psi}^2  \right)|\lambda_1-\lambda_2|^2. 
\end{align*}
For all $y\in\R^+$, we then have
\begin{align*}
&\| \sD \left(u(\cdot,\lambda_1)-u(\cdot,\lambda_2)\right)(y) \|_{L^2(\Omega;H_w)}^2 \\
&= \big\| \sD \big(\left(\Psi(X_{\tau}^{y_0,\eta}(x_0-h+\lambda_1\xi h)) - \Psi(X_{\tau}^{y_0,\eta}(x_0-h+\lambda_2\xi h)) \right) h_x\big) (y)\big\|_{L^2(\Omega;H_w)}^2\\
&= \|h_x\|_w^2 \E\left[\big|\sD \left(\Psi(X_{\tau}^{y_0,\eta}(x_0-h+\lambda_1\xi h)) - \Psi(X_{\tau}^{y_0,\eta}(x_0-h+\lambda_2\xi h)) \right)(y) \big|^2\right] \\
&\leq 2 \|h_x\|_w^2 \|h\|_w^2 \E\big[|\xi|^2\big] \left(L_{D\Psi}^2 |\lambda_1|^2 \|h\|_w^2 \E\big[|\xi|^2\big] + L_{\Psi}^2 \right)|\lambda_1-\lambda_2|^2.
\end{align*}
Hence \eqref{kolmogorov-condition} is satisfied and the result follows by Lemma \ref{technical-lemma}.

\noindent\textbf{Proof of (ii):}
We define
\begin{equation*}
u(z,\lambda) \coloneqq \Psi(X_{\tau}^{x_0,\eta}(y_0-h+\lambda\xi h)) h_x(z)
\end{equation*}
for $z\in\R^+$ and $\lambda\in\R$.
Following the argument in the proof of (i), we need to show that
\begin{equation}\label{u-lip}
 \| u(\cdot,\lambda_1)-u(\cdot,\lambda_2) \|_{L^2(\Omega;H_w)}^2 < C_1|\lambda_1 - \lambda_2|^2
\end{equation}
and 
\begin{equation}\label{Du-lip}
 \| \sD \left(u(\cdot,\lambda_1)-u(\cdot,\lambda_2)\right) \|_{L^2(\Omega;H_w\x H_w)}^2 < C_2|\lambda_1 - \lambda_2|^2
\end{equation}
for some constants $C_1$ and $C_2$. 
Since $\Psi$ is Lipscitz with Lipschitz constant $L_{\Psi}$, we have that
\begin{align*}
\| u(\cdot,\lambda_1)-u(\cdot,\lambda_2) \|_{L^2(\Omega;H_w)}^2 &= \E\left[\|\left(\Psi(X_{\tau}^{x_0,\eta}(y_0-h+\lambda_1\xi h)) - \Psi(X_{\tau}^{x_0,\eta}(y_0-h+\lambda_2\xi h)) \right) h_x \|_w^2\right]\\
&= \|h_x\|_w^2 \E\left[\big|\Psi(X_{\tau}^{x_0,\eta}(y_0-h+\lambda_1\xi h)) - \Psi(X_{\tau}^{x_0,\eta}(y_0-h+\lambda_2\xi h))\big|^2\right] \\
&\leq \|h_x\|_w^2 L_{\Psi}^2 \E\left[\|X_{\tau}^{x_0,\eta}(y_0-h+\lambda_1\xi h) - X_{\tau}^{x_0,\eta}(y_0-h+\lambda_2\xi h)\|_w^2\right].
\end{align*}
By Hölder's inequality, the Burkholder-Davis-Gundy inequality, the It\^{o} isometry, the Hille-Yosida theorem and the calculations in Step 1, it holds that
\begin{align*}
&\E\left[\|X_{\tau}^{x_0,\eta}(y_0-h+\lambda_1\xi h) - X_{\tau}^{x_0,\eta}(y_0-h+\lambda_2\xi h)\|_w^2\right]\\
&= \E\left[\Big\|\int_0^\tau \mS_{\tau-s}\left(Z_s\x \mathcal{U}_s ((\lambda_1-\lambda_2)\xi h)\right)\,dB_s\Big\|_w^2\right]\\
&= |\lambda_1-\lambda_2|^2 \E\left[|\xi|^2\Big\|\int_0^\tau \mS_{\tau-s}\left(Z_s\x \mathcal{U}_s h\right)\,dB_s\Big\|_w^2\right]\\
&\leq |\lambda_1-\lambda_2|^2 \E\left[|\xi|^4\right]^{1/2}\E\left[\Big\|\int_0^\tau \mS_{\tau-s}\left(Z_s\x \mathcal{U}_s h\right)\,dB_s\Big\|_w^4\right]^{1/2}\\
&\leq C_{BDG} |\lambda_1-\lambda_2|^2 \E\left[|\xi|^4\right]^{1/2}\E\left[\left(\int_0^\tau \| \mS_{\tau-s}\left(Z_s\x \mathcal{U}_s h\right)Q_B^{1/2}\|_{\mH_w}^2\,ds\right)^2\right]^{1/2}\\
&\leq C_{BDG} |\lambda_1-\lambda_2|^2 \E\left[|\xi|^4\right]^{1/2} \E\left[\left(\int_0^\tau C^2\e^{2c(\tau-s)}\|(Z_s\x \mathcal{U}_s h) Q_B^{1/2}\|_{\mH_w}^2\,ds\right)^2 \right]^{1/2}\\
&\leq C_{BDG} |\lambda_1-\lambda_2|^2 \E\left[|\xi|^4\right]^{1/2} \E\left[\left(\int_0^\tau C^2\e^{2c(\tau-s)}K^2\e^{2ks} \|h\|_w^2 \text{Tr}(Q_B)\,ds\right)^2 \right]^{1/2}\\
&= C_{BDG} |\lambda_1-\lambda_2|^2 \E\left[|\xi|^4\right]^{1/2} C^2 K^2 \|h\|_w^2 \text{Tr}(Q_B)\int_0^\tau \e^{2c(\tau-s)} e^{2ks}\,ds.
\end{align*}
This shows that \eqref{u-lip} holds.
To show that \eqref{Du-lip} holds, we need to show that
\begin{equation}\label{eq1}
\E\left[\|X_{\tau}^{x_0,\eta}(y_0-h+\lambda_1\xi h) - X_{\tau}^{x_0,\eta}(y_0-h+\lambda_2\xi h)\|_w^2  \|\lambda_1 \xi DX_{\tau}^{x_0,\eta}(y_0-h+\lambda\xi h)(h)\|_w^2\right] \leq C|\lambda_1-\lambda_2|^2
\end{equation}
and
\begin{equation}\label{eq2}
\E\left[\|\lambda_1 \xi DX_{\tau}^{x_0,\eta}(y_0-h+\lambda\xi h)(h) - \lambda_2 \xi DX_{\tau}^{x_0,\eta}(y_0-h+\lambda\xi h)(h)\|_w^2\right] \leq C|\lambda_1-\lambda_2|^2.
\end{equation}
We see that
\begin{align*}
&\E\left[\|\lambda_1 \xi DX_{\tau}^{x_0,\eta}(y_0-h+\lambda\xi h)(h) - \lambda_2 \xi DX_{\tau}^{x_0,\eta}(y_0-h+\lambda\xi h)(h)\|_w^2\right] \\
&= \E\left[\Big\|(\lambda_1-\lambda_2)\xi \int_0^\tau \mS_{\tau-s}\left(Z_s\x \mathcal{U}_s h\right)\,dB_s \Big\|_w^2\right] \\
&= |\lambda_1-\lambda_2|^2\E\left[|\xi|^2 \Big\| \int_0^\tau \mS_{\tau-s}\left(Z_s\x \mathcal{U}_s h\right)\,dB_s \Big\|_w^2\right],
\end{align*}
hence \eqref{eq2} follows from the calculations above.
By Hölder's inequality, the Burkholder-Davis-Gundy inequality, the It\^{o} isometry and the Hille-Yosida theorem, it holds that
\begin{align*}
&\E\left[\|X_{\tau}^{x_0,\eta}(y_0-h+\lambda_1\xi h) - X_{\tau}^{x_0,\eta}(y_0-h+\lambda_2\xi h)\|_w^2  \|\lambda_1 \xi DX_{\tau}^{x_0,\eta}(y_0-h+\lambda\xi h)(h)\|_w^2\right]\\
&=\E\left[\Big\|\int_0^\tau \mS_{\tau-s}\left(Z_s\x \mathcal{U}_s ((\lambda_1-\lambda_2)\xi h)\right)\,dB_s\Big\|_w^2  \Big\|\lambda_1 \xi \int_0^\tau \mS_{\tau-s}\left(Z_s\x \mathcal{U}_s h\right)\,dB_s\Big\|_w^2\right]\\
&= \lambda_1^2 |\lambda_1-\lambda_2|^2 \E\left[|\xi|^4\Big\|\int_0^\tau \mS_{\tau-s}\left(Z_s\x \mathcal{U}_s h\right)\,dB_s\Big\|_w^4\right]\\
&\leq \lambda_1^2 |\lambda_1-\lambda_2|^2 \E\left[|\xi|^8\right]^{1/2}\E\left[\Big\|\int_0^\tau \mS_{\tau-s}\left(Z_s\x \mathcal{U}_s h\right)\,dB_s\Big\|_w^8\right]^{1/2}\\
&\leq C_{BDG} \lambda_1^2 |\lambda_1-\lambda_2|^2 \E\left[|\xi|^8\right]^{1/2}\E\left[\left(\int_0^\tau \|\mS_{\tau-s}\left(Z_s\x \mathcal{U}_s h\right)Q_B^{1/2}\|_w^2\,ds\right)^4 \right]^{1/2}.
\end{align*} 
Hence \eqref{eq1} holds from the calculations above.
Equation \eqref{Du-lip} then follows from the same argument as in the proof of (i).
The result then follows by Lemma \ref{technical-lemma}.

\noindent\textbf{Proof of (iii):}
We define
\begin{equation*}
u(z,\lambda) \coloneqq \Psi(X_{\tau}^{x_0,y_0}(\eta-\zeta+\lambda\xi \zeta)) h_x(z)
\end{equation*}
for $z\in\R^+$ and $\lambda\in\R$.
Following the argument in the proof of (i), we need to show that
\begin{equation}\label{u-lip2}
 \| u(\cdot,\lambda_1)-u(\cdot,\lambda_2) \|_{L^2(\Omega;H_w)}^2 < C_1|\lambda_1 - \lambda_2|^2
\end{equation}
and 
\begin{equation}\label{Du-lip2}
 \| \sD \left(u(\cdot,\lambda_1)-u(\cdot,\lambda_2)\right) \|_{L^2(\Omega;H_w\x H_w)}^2 < C_2|\lambda_1 - \lambda_2|^2
\end{equation}
for some constants $C_1$ and $C_2$. Since $\Psi$ is Lipscitz with Lipschitz constant $L_{\Psi}$, we have that
\begin{align*}
\| u(\cdot,\lambda_1)-u(\cdot,\lambda_2) \|_{L^2(\Omega;H_w)}^2 &= \E\left[\|\left(\Psi(X_{\tau}^{x_0,y_0}(\eta-\zeta+\lambda_1\xi \zeta)) - \Psi(X_{\tau}^{x_0,y_0}(\eta-\zeta+\lambda_2\xi \zeta)) \right) h_x \|_w^2\right]\\
&= \|h_x\|_w^2 \E\left[\big|\Psi(X_{\tau}^{x_0,y_0}(\eta-\zeta+\lambda_1\xi \zeta)) - \Psi(X_{\tau}^{x_0,y_0}(\eta-h+\lambda_2\xi \zeta))\big|^2\right] \\
&\leq \|h_x\|_w^2 L_{\Psi}^2 \E\left[\|X_{\tau}^{x_0,y_0}(\eta-\zeta+\lambda_1\xi \zeta) - X_{\tau}^{x_0,y_0}(\eta-\zeta+\lambda_2\xi \zeta)\|_w^2\right].
\end{align*}
Since $\xi$ is independent of $B$ and $W$, we have by the It\^{o} isometry that
\begin{align*}
&\E\left[\|X_{\tau}^{x_0,y_0}(\eta-\zeta+\lambda_1\xi \zeta) - X_{\tau}^{x_0,y_0}(\eta-\zeta+\lambda_2\xi \zeta)\|_w^2\right]\\
&= \E\left[\Big\|\int_0^\tau \mS_{\tau-s}\left(Z_s\x \int_0^s \mathcal{U}_{s-u}(\lambda_1-\lambda_2)\xi\zeta\, dW_u\right)\,dB_s\Big\|_w^2\right]\\
&= |\lambda_1-\lambda_2|^2 \E\left[|\xi|^2\Big\|\int_0^\tau \mS_{\tau-s}\left(Z_s\x \int_0^s \mathcal{U}_{s-u}\zeta\, dW_u\right)\,dB_s\Big\|_w^2\right]\\
&= |\lambda_1-\lambda_2|^2 \E\left[|\xi|^2\right]\E\left[\Big\|\int_0^\tau \mS_{\tau-s}\left(Z_s\x \int_0^s \mathcal{U}_{s-u}\zeta\, dW_u\right)\,dB_s\Big\|_w^2\right]\\
&= |\lambda_1-\lambda_2|^2 \E\left[|\xi|^2\right]\E\left[\int_0^\tau \left\| \mS_{\tau-s}\left(Z_s\x \int_0^s \mathcal{U}_{s-u}\zeta\, dW_u\right)Q_B^{1/2}\right\|_{\mH_w}^2\,ds\right].
\end{align*}
From the calculations in Step 1 it follows that \eqref{u-lip2} holds.
To show that \eqref{Du-lip2} holds, we need to show that
\begin{equation}\label{eq3}
\E\left[\|X_{\tau}^{x_0,y_0}(\eta-\zeta+\lambda_1\xi \zeta) - X_{\tau}^{x_0,y_0}(\eta-\zeta+\lambda_2\xi \zeta)\|_w^2  \|\lambda_1 \xi DX_{\tau}^{x_0,y_0}(\eta-\zeta+\lambda\xi \zeta)(\zeta)\|_w^2\right] \leq C|\lambda_1-\lambda_2|^2
\end{equation}
and
\begin{equation}\label{eq4}
\E\left[\|\lambda_1 \xi DX_{\tau}^{x_0,y_0}(\eta-\zeta+\lambda\xi \zeta)(\zeta) - \lambda_2 \xi DX_{\tau}^{x_0,y_0}(\eta-\zeta+\lambda\xi \zeta)(\zeta)\|_w^2\right] \leq C|\lambda_1-\lambda_2|^2.
\end{equation}
We see that
\begin{align*}
&\E\left[\|\lambda_1 \xi DX_{\tau}^{x_0,y_0}(\eta-\zeta+\lambda\xi \zeta)(\zeta) - \lambda_2 \xi DX_{\tau}^{x_0,y_0}(\eta-\zeta+\lambda\xi \zeta)(h\zeta)\|_w^2\right] \\
&= \E\left[\Big\|(\lambda_1-\lambda_2)\xi \int_0^\tau \mS_{\tau-s}\left(Z_s\x \int_0^s \mathcal{U}_{s-u}\zeta\, dW_u\right)\,dB_s \Big\|_w^2\right] \\
&= |\lambda_1-\lambda_2|^2\E\left[|\xi|^2 \Big\| \int_0^\tau \mS_{\tau-s}\left(Z_s\x \int_0^s \mathcal{U}_{s-u}\zeta\, dW_u\right)\,dB_s \Big\|_w^2\right],
\end{align*}
hence \eqref{eq4} follows from the calculations above.
Since $\xi$ is independent of $B$ and $W$, it follows from the Burkholder-Davis-Gundy inequality, Jensen's inequality and Fubini's theorem that
\begin{align*}
&\E\left[\|X_{\tau}^{x_0,y_0}(\eta-\zeta+\lambda_1\xi \zeta) - X_{\tau}^{x_0,y_0}(\eta-\zeta+\lambda_2\xi \zeta)\|_w^2  \|\lambda_1 \xi DX_{\tau}^{x_0,y_0}(\eta-\zeta+\lambda\xi \zeta)(\zeta)\|_w^2\right]\\
&=\E\left[\Big\|\int_0^\tau \mS_{\tau-s}\left(Z_s\x \int_0^s \mathcal{U}_{s-u}(\lambda_1-\lambda_2)\xi\zeta\, dW_u \right)\,dB_s\Big\|_w^2  \Big\|\lambda_1 \xi \int_0^\tau \mS_{\tau-s}\left(Z_s\x \int_0^s \mathcal{U}_{s-u}\zeta\, dW_u\right)\,dB_s\Big\|_w^2\right]\\
&= \lambda_1^2 |\lambda_1-\lambda_2|^2 \E\left[|\xi|^4\Big\|\int_0^\tau \mS_{\tau-s}\left(Z_s\x \mathcal{U}_s h\right)\,dB_s\Big\|_w^4\right]\\
&= \lambda_1^2 |\lambda_1-\lambda_2|^2 \E\left[|\xi|^4\right]\E\left[\Big\|\int_0^\tau \mS_{\tau-s}\left(Z_s\x \int_0^s \mathcal{U}_{s-u}\zeta\, dW_u\right)\,dB_s\Big\|_w^4\right]\\
&\leq C \lambda_1^2 |\lambda_1-\lambda_2|^2 \E\left[|\xi|^4\right]\E\left[\left(\int_0^\tau \left\|\mS_{\tau-s}\left(Z_s\x \int_0^s \mathcal{U}_{s-u}\zeta\, dW_u\right)Q_B^{1/2}\right\|_w^2\,ds\right)^2 \right]\\
&\leq C \lambda_1^2 |\lambda_1-\lambda_2|^2 \E\left[|\xi|^4\right]\E\left[\int_0^\tau \left\|\mS_{\tau-s}\left(Z_s\x \int_0^s \mathcal{U}_{s-u}\zeta\, dW_u\right)Q_B^{1/2}\right\|_w^4\,ds \right]\\
&\leq C \lambda_1^2 |\lambda_1-\lambda_2|^2 \E\left[|\xi|^4\right]\E\left[\int_0^\tau \|\mS_{\tau-s}\|_{\text{op}}^4\left\|\left(Z_s\x \int_0^s \mathcal{U}_{s-u}\zeta\, dW_u\right)Q_B^{1/2}\right\|_w^4\,ds \right]\\
&= C \lambda_1^2 |\lambda_1-\lambda_2|^2 \E\left[|\xi|^4\right]\int_0^\tau \|\mS_{\tau-s}\|_{\text{op}}^4 \E\left[\left\|\left(Z_s\x \int_0^s \mathcal{U}_{s-u}\zeta\, dW_u\right)Q_B^{1/2}\right\|_w^4 \right]\,ds.
\end{align*} 
From the calculations in Step 1, we have that
\begin{align*}
\left\|\left(Z_s\x \int_0^s \mathcal{U}_{s-u}\zeta\, dW_u\right)Q_B^{1/2}\right\|_w^4 \leq \text{Tr}(Q_B)^2 \left\|\int_0^s \mathcal{U}_{s-u}\zeta\, dW_u\right\|_w^4.
\end{align*}
The Burkholder-Davis-Gundy inequality, Jensen's inequality and the Hille-Yosida theorem give that
\begin{align*}
\E\left[\left\|\int_0^s \mathcal{U}_{s-u}\zeta\, dW_u\right\|_w^4\right] &\leq C \E\left[\left(\int_0^s \|\mathcal{U}_{s-u}\zeta Q_W^{1/2}\|_{\mH_w}^2\,ds\right)^2\right]\\
&\leq C \E\left[\int_0^s \|\mathcal{U}_{s-u}\zeta Q_W^{1/2}\|_{\mH_w}^4\,ds\right]\\
&\leq C \E\left[\int_0^s \|\mathcal{U}_{s-u}\|_{\text{op}}^4 \|\zeta\|_{\text{op}}^4 \text{Tr}(Q_W)^2\,ds\right]\\
&\leq C \text{Tr}(Q_W)^2 \|\zeta\|_{\text{op}}^4 \E\left[\int_0^s K^4 e^{4k(s-u)}  \,ds\right].
\end{align*}
Hence \eqref{eq3} holds.
Equation \eqref{Du-lip2} then follows from the same argument as in the proof of (i).
The result then follows by Lemma \ref{technical-lemma}.
\end{proof}

In \cite[Section 3.3]{BDHP} the authors generalize the expression for the delta in their model to payoff functions which are not Fr\'{e}chet differentiable. In this case they replace the payoff function by a Moreau-Yosida approximation and take the limit to obtain the delta.
It is expected that the same argument can be used to generalize our results to non-smooth payoff functions in a similar way. 

\section*{Acknowledgement}
The research leading to this work has received support from The Research Council of Norway via the project STORM: Stochastics for Time-Space Risk Models (nr. 274410). 
We are grateful to Espen Sande for many useful discussions.

\end{document}